\newcommand{\Z}{{\mathbb Z}}
\def\xa{(\underline{X},\underline{A})}
 \newtheorem{thm}{Theorem}[section]
 \newtheorem{defn}[thm]{Definition}
 \newtheorem{cor}[thm]{Corollary}
 \newtheorem{prop}[thm]{Proposition}
 \theoremstyle{definition}
 \theoremstyle{remark}
 \newtheorem{remark}[thm]{Remark}
 \newtheorem{ex}{Example}
 \numberwithin{equation}{section}
 \numberwithin{equation}{section}
\begin{document}

\title[On Polyhedral Products Spaces over Polyhedral Joins]
{On Polyhedral Products Spaces over Polyhedral Joins}

\author[E.Vidaurre]{Elizabeth Vidaurre}
\address{University of Rochester}%
\email{elizabeth.vidaurre@rochester.edu}%
\date{\today}

\begin{abstract}
The construction of a simplicial complex given by polyhedral joins (introduced by Anton Ayzenberg in \cite{AA}),  generalizes Bahri, Bendersky, Cohen and Gitler's $J$-construction and simplicial wedge construction \cite{MR3426378}. This article gives a cohomological decomposition of a polyhedral product over a polyhedral join for certain families of pairs of simplicial complexes. A formula for the Hilbert-Poincar\'{e} series is given, which generalizes Ayzenberg's formula for the moment-angle complex.
 \end{abstract}

\maketitle  

\section{Introduction}
From a simplicial complex and pairs of topological spaces, polyhedral product spaces give a family of spaces. The spaces that have the form of a polyhedral product include moment-angle complexes, complements of subspace arrangements, and intersections of quadrics among others. In certain cases, polyhedral products provide geometric realizations of right-angled Artin groups and the Stanley-Reisner ring.

One can define the \emph{polyhedral join}, $Z^*_K(\underline{X},\underline{A})$, similarly to the polyhedral product by replacing the cartesian product with the topological or simplicial join  \cite{AA}. When the pairs $(\underline{X},\underline{A})$ are simplicial complexes $(\underline{L},\underline{K})$, the associated polyhedral join is a simplicial complex. A natural question that arises concerning a polyhedral product over such a simplicial complex is what is the cohomology of the polyhedral product space over a 
polyhedral join, $Z_{Z^*_K(\underline{L},\underline{K})}(\underline{X},\underline{A})$, in terms of the simplicial complexes $K$, $K_i$, $L_i$ and the cohomology of the spaces $(\underline{X},\underline{A})$? 

Polyhedral joins behave well with polyhedral products. For example, there is the following homeomorphism between real moment-angle complexes and moment-angle complexes \cite{MR3426378}:
$$Z_{Z^*_K(\Delta^1,\partial \Delta^1)}(D^1,S^0)=Z_K(D^2,S^1)$$
This means that any moment-angle complex can be expressed as a real moment-angle complex.  This equivalence has since been used by many  \cite{Ustinovskii2011, CP, MR3084441, Lim15, MR3073929}.

Using polyhedral joins $Z^*_K(\underline{L},\underline{K})$, this can be generalized to arbitrary polyhedral products. It allows for a give and take in dimension. A polyhedral product can be expressed as another polyhedral product of either lower dimensional spaces $(\underline{X},\underline{A})$ over a higher dimensional simplicial complex, or of higher dimensional spaces $(\underline{X},\underline{A})$ over a lower dimensional simplicial complex.

We use a spectral sequence constructed by Bahri, Bendersky, Cohen and Gitler that converges to the cohomology of $Z_K(\underline{X},\underline{A})$ in terms of the long exact sequence for the pairs $(X_i,A_i)$. It gives a Kunneth-like formula for the general polyhedral product. 
In Section \ref{general} we show how the spectral sequence reduces the problem to analyzing the map $H^*(\Sigma lk_{L_i}(\sigma)) \rightarrow H^*(\Sigma lk_{K_i}(\sigma))$
induced by the inclusion of links $lk_{K_i}(\sigma) \hookrightarrow lk_{L_i}(\sigma)$,
where $\sigma$ is a simplex in $K_i$. Since there is no convenient way to describe the induced map in full generality, we consider two cases of pairs of simplicial complexes. 

In Section \ref{csc}, we restrict to the pairs $(L_i,K_i)=(\Delta^{l_i},K_i)$ where $\Delta^{l_i}$ is the $l_i$-simplex. With this pair, the associated polyhedral join is called the composed simplicial complex $K(K_1,\ldots, K_m)$. This pair covers the case of the homeomorphism between moment-angle complexes and real moment-angle complexes. As a consequence to Theorem \ref{PPoverCSC}, we give the corresponding Hilbert-Poincar\'{e} series. For the polyhedral product $Z_{K(K_1,\ldots,K_m)}(\underline{CA},\underline{A})$, we define a multigraded structure and give the multigraded Betti numbers. Moreover,  the ring structure of $H^*(Z_{Z_K(\underline{L},\underline{K})}(\underline{X},\underline{A}))$ is analyzed. 

Finally, in Section \ref{pair}, we study the case given by the pair $(L_i,K_i)=(L_i, \emptyset)$ as the map induced by the inclusion of links can be fully described. The cohomology groups will be given in Theorem \ref{empty}. 

\section{Polyhedral Product Spaces}\label{pps}
Let $[m]=\{1,2,\ldots, m\}$ denote the set of integers from $1$ to $m$. An \emph{abstract simplicial complex}, $K$, on $[m]$ is a subset of the power set of $[m]$, such that:
\begin{enumerate}
\item $\emptyset \in K$.
\item If $ \sigma \in K$ with $\tau \subset \sigma$, then $\tau \in K$.
\end{enumerate}
 A $n$-simplex is the full power set of $[n+1]$ and is denoted $\Delta^{n}$. Associated to an abstract simplicial complex is its \emph{geometric realization}, denoted $\mathcal{K}$ or $|K|$ (also called a geometric simplicial complex). A (geometric) $n$-simplex, $\Delta^n$, is the convex hull of $n+1$ points. 

We do not assume $m$ is minimal, i.e. there may exist $[n] \subsetneq [m]$ such that $K$ is contained in the power set of $[n]$. In particular, we allow \emph{ghost vertices} $\{i\} \subset [m]$ such that $\{i\} \notin K$.

Let $K$ and $L$ be simplicial complexes on sets $[m]$ and $[n]$, respectively. A map from $[m]$ to $[n]$ induces a \emph{simplicial map} from  $K$ to $L$ if it satisfies the property that simplices map to simplices.
\begin{defn}\label{full}
Let $I$ be a subset of $[m]$. The \emph{full subcomplex of $K$ in  $I$} is denoted $K_I$. It is a simplicial complex on the set $I$ and defined $$K_I := \{\sigma \in K| \sigma \subset I\}$$
\end{defn}
It will sometimes be helpful to use the notation $K|_I$ and it is often called the restriction of $K$ to $I$ in the literature.

 Given an abstract simplicial complex $K$, let $\mathcal{S}_K$ be the category with simplices of $K$ as the objects and inclusions as the morphisms. In particular, for $\sigma, \tau \in ob(\mathcal{S}_K)$, there is a morphism $\sigma \rightarrow \tau$ whenever $\sigma \subset \tau$. Define $\mathcal{CW}$ to be the usual category of CW-complexes. Define $(\underline{X},\underline{A})$ to be a collection of pairs of CW-complexes $\{(X_i,A_i)\}_{i=1}^m$, where $A_i$ is a subspace of $X_i$ for all $i$.

  \
  
{\begin{defn} \label{func}
Given an abstract simplicial complex $K$ on $[m]$, simplices $\sigma, \tau$ of $K$  and a collection of pairs of CW-complexes  $(\underline{X},\underline{A})$, define a diagram $D:\mathcal{S}_K \rightarrow \mathcal{CW}$, given by 
 $$D(\sigma)= \prod \limits_{i\in [m]}Y_i \qquad \text{  where } \displaystyle Y_i=\begin{cases} X_i & i \in \sigma \\ A_i & i \in [m]\backslash \sigma \end{cases}$$
For a morphism $f:\sigma \rightarrow \tau$, the functor $D$ maps $f$ to $i:D(\sigma)\rightarrow D(\tau)$ where $i$ is the canonical injection.

The \emph{polyhedral product space}, $ \displaystyle Z_K(\underline{X},\underline{A})\subset \prod_{i \in [m]} X_i$ is $$Z_K(\underline{X},\underline{A}):=\underset{\sigma \in K}{colim } D(\sigma)=\bigcup_{\sigma \in K} D(\sigma) $$\end{defn}}
  Notice that it suffices to take the colimit over the maximal simplices of $K$. In fact, simplicial complexes can be defined by their maximal simplices and this description will be used throughout. In the case where $(X_i,A_i)=(X,A)$ for all $i$, we write $Z_K(X,A)$. 
  
  Some examples of polyhedral products are moment-angle complexes $Z_K(D^2,S^1)$, which have the homotopy type of the complement of a subspace arrangement, and Davis-Januszkiewicz spaces $Z_K(CP^\infty, pt)$, which have a Stanley-Reisner ring as cohomology ring. For a simple example, consider the following example.
  \begin{ex} \label{helpful}
  Let $K$ be the boundary of a $2$-simplex. 
$$\begin{array}{ccl}Z_K(D^1,S^0)& = &D^1\times D^1 \times S^0 \cup D^1 \times S^0 \times D^1 \cup S^0 \times D^1 \times D^1 \\
 &=& \partial (D^1 \times D^1 \times D^1) \\
 & \cong & S^2 \end{array}$$ 
 
 In general, $Z_{\partial \Delta^{m}}(D^1,S^0) \cong S^m. $ \end{ex}
 Next we will define the polyhedral smash product, a space analogous to the polyhedral smash product with the smash product operation in place of the cartesian product.
 \begin{defn}
Let the CW-pairs $(\underline{X}, \underline{A})$ be pointed. Likewise, define a functor  $\widehat{D}(\sigma):\mathcal{S}_K \rightarrow \mathcal{CW}_*$ by:
$$\widehat{D}(\sigma)=\wedge Y_i \qquad \text{ where }\displaystyle Y_i=\begin{cases} X_i & i \in \sigma \\ A_i & i \notin \sigma \end{cases}$$ Then the \emph{polyhedral smash product} is  $$\widehat{Z}_K(\underline{X},\underline{A})=\bigcup \widehat{D}(\sigma)$$
\end{defn} 
The following theorem of Bahri, Bendersky, Cohen and Gitler (BBCG) gives a decomposition of a suspension of a polyhedral product space.
\begin{thm}[Splitting Theorem, BBCG 2008]\label{splitting}
Let $(\underline{X_I},\underline{A_I}) =\{(X_i,A_i)\}_{i\in I}$. 
Then  \begin{displaymath} \Sigma Z_K(\underline{X},\underline{A}) \simeq \Sigma \bigvee_{I\subset [m]} \widehat{Z} _{K_I}(\underline{X_I},\underline{A_I})) \end{displaymath}
\end{thm}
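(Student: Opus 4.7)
The plan is to leverage the classical James-type suspension splitting of a product of pointed CW-complexes,
\[
\Sigma \prod_{i=1}^m Y_i \;\simeq\; \Sigma \bigvee_{\emptyset \neq I \subseteq [m]} \bigwedge_{i \in I} Y_i,
\]
and then transport this splitting through the colimit defining $Z_K(\underline{X},\underline{A})$. The key structural fact that makes this possible is that this splitting is natural with respect to coordinate-wise pointed inclusions $Y_i \hookrightarrow Y_i'$: such a map induces the corresponding inclusion of smash-summands on the right-hand side. Since every face inclusion $D(\sigma) \hookrightarrow D(\tau)$ in the polyhedral product diagram is precisely a coordinate-wise inclusion (with $Y_i = A_i$ possibly replaced by $Y_i = X_i$ as $i$ enters $\tau \setminus \sigma$), naturality applies directly at the level of the diagram $D : \mathcal{S}_K \to \mathcal{CW}_*$.

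First I would choose pointed CW structures and replace the union $\bigcup_{\sigma \in K} D(\sigma)$ by a homotopy colimit, or equivalently apply the splitting cell-by-cell after verifying cofibration hypotheses, so that applying $\Sigma$ and commuting it past the colimit is legitimate. Then, for each $\sigma \in K$, I would write
\[
\Sigma D(\sigma) \;\simeq\; \Sigma \bigvee_{\emptyset \neq I \subseteq [m]} \widehat{D}_I(\sigma),
\qquad
\widehat{D}_I(\sigma) \;=\; \bigwedge_{i \in I} Y_i,
\]
where $Y_i = X_i$ if $i \in \sigma$ and $Y_i = A_i$ if $i \notin \sigma$. Commuting the suspended colimit with the wedge — which is the step that requires the naturality mentioned above — yields
\[
\Sigma Z_K(\underline{X},\underline{A}) \;\simeq\; \Sigma \bigvee_{\emptyset \neq I \subseteq [m]} \operatorname*{colim}_{\sigma \in \mathcal{S}_K} \widehat{D}_I(\sigma).
\]

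Next I would identify the inner colimit with $\widehat{Z}_{K_I}(\underline{X_I},\underline{A_I})$. The observation is that $\widehat{D}_I(\sigma)$ depends on $\sigma$ only through the intersection $\sigma \cap I$: coordinates outside $I$ are discarded by the smash. As $\sigma$ ranges over simplices of $K$, the intersections $\sigma \cap I$ range exactly over simplices of the full subcomplex $K_I$, with the cofinal system being the simplices of $K_I$ themselves. Thus the colimit over $\mathcal{S}_K$ of $\widehat{D}_I(\sigma)$ collapses to the colimit over $\mathcal{S}_{K_I}$ of $\bigwedge_{i \in I} Y_i$, which is by definition $\widehat{Z}_{K_I}(\underline{X_I},\underline{A_I})$. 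Finally, absorbing the (contractible) $I = \emptyset$ summand and a single basepoint into the wedge — or noting that only non-empty $I$ contribute nontrivially — produces the stated decomposition.

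The main technical obstacle is the interchange of $\Sigma$ with the colimit: the union $\bigcup_{\sigma} D(\sigma)$ is a pushout of pieces that are not individually pointed-cofibrantly embedded in any obvious way, so one needs to either work up to homotopy with a homotopy-colimit model, or verify explicitly that the suspension of the cofibrant filtration of $Z_K(\underline{X},\underline{A})$ by stages of $K$ (e.g.\ by skeleta or by a chosen ordering of maximal simplices) splits compatibly with the splittings of the individual stages. Once this coherence is settled, the combinatorial identification $\sigma \cap I \in K_I$ and the naturality of the James splitting finish the argument routinely.
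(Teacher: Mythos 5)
The paper does not prove this statement---it is quoted as Theorem~\ref{splitting} from Bahri--Bendersky--Cohen--Gitler with a citation---so there is no internal proof to compare against; measured against the original BBCG argument, your outline is essentially the same proof: the natural James-type splitting $\Sigma\prod Y_i\simeq\Sigma\bigvee_I\widehat{Y}^I$ applied levelwise to the diagram $D$, an interchange of $\Sigma$ with the (homotopy) colimit justified by the cofibrancy of the diagram of CW inclusions, and the cofinality observation that $\sigma\mapsto\sigma\cap I$ identifies $\operatorname{colim}_{\mathcal{S}_K}\widehat{D}_I$ with $\widehat{Z}_{K_I}(\underline{X_I},\underline{A_I})$. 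The two points you flag as needing care (naturality of the product splitting under coordinatewise inclusions, and colim $=$ hocolim for this diagram) are exactly the points the original proof settles, so the proposal is sound.
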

where  $\Sigma$ denotes the reduced suspension.
  
 Given any simplicial complex, the following procedure allows for the construction of an infinite family of associated simplicial complexes. Let $\mathcal{SC}$ be the category with simplicial complexes as the objects. 
\begin{defn}[Ayzenberg \cite{AA}]
 Let $K$ be a simplicial complex on $m$ vertices and $\sigma$ a  simplex of $K$. Let $(\underline{L},\underline{K})=\{L_i,K_i\}_{i\in [m]}$ be $m$ pairs of simplicial complexes, where $K_i$ is a subsimplicial complex of $L_i$ and both are defined on the index set $[l_i]$.
Consider a functor $D^*:\mathcal{S}_K \rightarrow \mathcal{SC}$ defined in the following way
$$D^*(\sigma)= \underset{i\in [m]}{\ast} Y_i \text{ , where } \displaystyle Y_i=\begin{cases} L_i & i \in \sigma \\ K_i & i \notin \sigma \end{cases}$$

The associated \emph{polyhedral join} is the colimit of the diagram $$Z^*_K(\underline{L},\underline{K}):=\underset{\sigma \in K}{colim D^*(\sigma)}=\bigcup_{\sigma \in K} D(\sigma)$$ 
\end{defn}

 Note that $Z^*_K(\underline{L},\underline{K})$ is a subsimplicial complex of $\underset{i\in [m]}{\ast} L_i$, which is a simplicial complex on the set  $[\sum\limits_{i\in [m]} {l_i}]$. In particular,  $D(\sigma)$ is the join of simplicial complexes $L_i$ for $i \in \sigma$ and simplicial complexes $K_j$ for $j\in [m]\backslash \sigma$. 

\begin{defn}\label{comp} Let $K$ be a simplicial complex on the set $[m]$ and $\{L_i\}_{i\in[m]}$ be simplicial complexes on the sets $[l_i]$. 

The \emph{composition of $K$ with $L_i$}, denoted $K(L_1,\ldots,L_m)$, is defined to be $$K(L_1,\ldots,L_m):= Z^*_K(\Delta^{l_i-1},L_i)$$
\end{defn}
The composition $K(L_1, \ldots, L_m)$ may also be defined by the following condition: for subsets $\sigma_i \subset [l_i]$, the set $\sigma=\sigma_1 \sqcup \ldots \sqcup \sigma_m$ is a simplex of $K(L_1,\ldots, L_m)$ whenever the set $\{ i\in [m] \: | \: \sigma_i \notin L_i \}$ is a simplex of $K$.

It is noteworthy that simplicial complexes form an operad where the simplicial complex on $m$ vertices is viewed as an $m$-adic operation. See \cite{AA} for more details.

The composition is a generalization of the J-construction \cite{MR3426378} and the simplicial wedge construction \cite{Ewald}.

The \emph{link} of $\sigma \in K$, denoted $lk_K(\sigma)$, is a simplicial complex on the set $[m] \backslash \sigma$ defined by  $\tau \in lk_K(\sigma)$ if and only if $ \sigma \cup \tau \in K$. This indexing set is used to be consistent with the definition in \cite{AA}. Given the indexing set of this complex, the link may have ghost vertices that are not ghost vertices of $K$.

\begin{ex}\label{bdayhat}
This is an example of a composition of simplicial complexes $K(L_1,\ldots, L_m)$. Let $m=3$ and $K=\{\{1\},\{2,3\}\}$.

$l_1=1$ and $L_1=\{\emptyset\}$

$l_2=1$ and $L_2=\{\{21\}\}$

$l_3=2$ and $L_3=\{\{31\},\{32\}\}$

Then 

$\begin{array}{ccccc}K(L_1,L_2,L_3) &=& D^*(\{2,3\}) & \cup  &D^*(\{1\}) \\
&=&L_1 \ast \Delta^0 \ast \Delta^1 & \cup & \Delta^0 \ast L_2 \ast L_3 \\
&=& \{\emptyset \} \ast \{21\} \ast \{31,32\}  &\cup & \{11\} \ast \{21\} \ast \{31\},\{32\} \\
&=& \{21,31,32\}, &  & \{11,21,31\}, \{11,21,32\}\end{array}$ 

\begin{figure}[H]
\centering
\includegraphics[width=3cm,height=3cm]{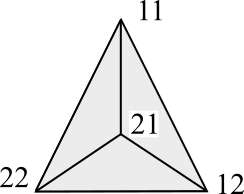}
\caption{Example of a composed simplicial complex}
\end{figure}
\end{ex}

Composed simplicial complexes have a nice relationship with polyhedral products. Note that in the following proposition the indexing set of $(\underline{X},\underline{A})$ is different than it has been up to this point, and so it will be explicitly labeled. The notation used will be $Z_K(X_i,A_i)_{i\in I}$ for some indexing set $I$.

\begin{prop}[Ayzenberg {\cite[Proposition~5.1]{AA}}]\label{aa} Let $K$ be a simplicial complex on $m$ vertices and $\{L_i\}_{i\in[m]}$ be simplicial complexes with $l_i$ vertices. We have $\displaystyle \sum_{i\in [m]}{l_i}$ pairs $(X_{ij},A_{ij})$ with $i \in [m]$ and $j \in l_i$. Then
$$Z_{K(L_1,\ldots,L_m)}(X_{ij},A_{ij})_{i\in [m],j\in[l_i]}=Z_K(\prod\limits_{j\in [l_i]} {X_{ij}},Z_{L_i}(X_{ij},A_{ij})_{j\in[l_i]})_{i\in[m]}$$
\end{prop}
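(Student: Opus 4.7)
The plan is to verify the asserted equality of subspaces of $\prod_{i\in[m],\,j\in[l_i]}X_{ij}$ by giving a single combinatorial membership criterion for each side and checking the criteria coincide. Both sides are naturally described as unions over simplices, so the argument is really an unwinding of Definition \ref{func} and Definition \ref{comp}, using the characterization of simplices of $K(L_1,\ldots,L_m)$ stated immediately after Definition \ref{comp}.

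First, I would unpack the left-hand side. A point $x=(x_{ij})$ lies in $Z_{K(L_1,\ldots,L_m)}(X_{ij},A_{ij})$ if and only if there is a simplex of $K(L_1,\ldots,L_m)$ containing $\{(i,j):x_{ij}\notin A_{ij}\}$. By the characterization following Definition \ref{comp}, every simplex of $K(L_1,\ldots,L_m)$ has the form $\sigma=\sigma_1\sqcup\cdots\sqcup\sigma_m$ with $\sigma_i\subseteq[l_i]$, and such a $\sigma$ lies in $K(L_1,\ldots,L_m)$ exactly when $\{i\in[m]:\sigma_i\notin L_i\}\in K$. Thus $x$ is in the LHS iff there exist $\sigma_i\subseteq[l_i]$ with $x_{ij}\in A_{ij}$ for $j\notin\sigma_i$ and with $\{i:\sigma_i\notin L_i\}\in K$.

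Next I would unpack the right-hand side. A point $x$ is in $Z_K\bigl(\prod_{j}X_{ij},Z_{L_i}(X_{ij},A_{ij})\bigr)_{i\in[m]}$ iff there is $\tau\in K$ such that for each $i\notin\tau$ the tuple $(x_{ij})_{j\in[l_i]}$ belongs to $Z_{L_i}(X_{ij},A_{ij})$; i.e.\ for every $i\notin\tau$ there is $\rho_i\in L_i$ with $x_{ij}\in A_{ij}$ whenever $j\notin\rho_i$. The equivalence of the two criteria will then be established by two explicit translations: given the LHS data $(\sigma_i)$, take $\tau=\{i:\sigma_i\notin L_i\}\in K$ and $\rho_i=\sigma_i\in L_i$ for $i\notin\tau$; conversely, given the RHS data $(\tau,\rho_i)$, set $\sigma_i=\rho_i$ for $i\notin\tau$ and $\sigma_i=[l_i]$ for $i\in\tau$, so that $\{i:\sigma_i\notin L_i\}\subseteq\tau\in K$ and, on each block $i$, the prescribed membership conditions on the $x_{ij}$ still hold.

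The main obstacle, such as it is, is bookkeeping: one must consistently juggle three different index sets (the vertex set $[m]$ of $K$, the vertex sets $[l_i]$ of the $L_i$, and the disjoint union $[l_1]\sqcup\cdots\sqcup[l_m]$ on which $K(L_1,\ldots,L_m)$ lives), and verify that the ``fattening'' $\sigma_i=[l_i]$ for $i\in\tau$ does not take us out of the composition. Once one notes that $K$ is closed under taking subsets, this last point is automatic, and no topological argument beyond the colimit definition of the polyhedral product is needed.
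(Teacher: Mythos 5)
Your proof is correct, and it is essentially the argument the paper intends: the paper defers to the ``shuffling'' in the proof of Proposition \ref{smash}, which likewise rests on identifying a simplex $\sigma'$ of $K(L_1,\ldots,L_m)$ with the data of $\tau\in K$ together with $\rho_i\in L_i$ for $i\notin\tau$ (fattening to $[l_i]$ for $i\in\tau$). You have simply rewritten that union-reindexing as a pointwise membership criterion, which is a legitimate and equivalent presentation.
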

These spaces are equal, not just homeomorphic. The proof inolves a shuffling of the spaces $X_i$ and $A_i$ as in the proof of proposition \ref{smash}. 

 Using methods of \cite{AA} we prove the analogous result for the polyhedral smash product.
\begin{prop}\label{smash}Let $K$ be a simplicial complex on $m$ vertices and $\{L_i\}_{i\in[m]}$ be simplicial complexes with $l_i$ vertices. Then $$\widehat{Z}_{K(L_1,\ldots,L_m)}(X_{ij},A_{ij})_{i\in [m],j\in[l_i]}=\widehat{Z}_K(\underset{j\in [l_i]}{\bigwedge} {X_{ij}},\widehat{Z}_{L_i}(X_{ij},A_{ij})_{j\in[l_i]})_{i\in[m]}$$
\end{prop}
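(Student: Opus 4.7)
The plan is to mirror Ayzenberg's proof of Proposition \ref{aa}, replacing Cartesian products with smash products throughout, and then to identify both sides as unions of the same smash pieces indexed by the same family of tuples $(\sigma_i)_{i\in[m]}$. The key tool is the distributivity of the smash product over unions of subspaces sharing a common basepoint.

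First I would unpack the left-hand side from the definition of the polyhedral smash product. Writing each simplex of $K(L_1,\ldots,L_m)$ as a disjoint union $\sigma=\sigma_1\sqcup\cdots\sqcup\sigma_m$ with $\sigma_i\subset[l_i]$, it becomes
\[
\bigcup_{\sigma\in K(L_1,\ldots,L_m)} \bigwedge_{i\in[m]}\bigwedge_{j\in[l_i]} Y_{ij}(\sigma_i),
\]
where $Y_{ij}(\sigma_i)=X_{ij}$ if $j\in\sigma_i$ and $Y_{ij}(\sigma_i)=A_{ij}$ otherwise. Next I would unfold the right-hand side as a union over $\tau\in K$ of an $m$-fold smash whose $i$-th factor is $\bigwedge_j X_{ij}$ when $i\in\tau$ and $\widehat{Z}_{L_i}(X_{ij},A_{ij})$ when $i\notin\tau$. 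Writing $\bigwedge_j X_{ij}=\bigcup_{\sigma_i\in\Delta^{l_i-1}}\bigwedge_j Y_{ij}(\sigma_i)$ (the full smash is recovered at $\sigma_i=[l_i]$) and $\widehat{Z}_{L_i}(X_{ij},A_{ij})=\bigcup_{\sigma_i\in L_i}\bigwedge_j Y_{ij}(\sigma_i)$ by definition, distributivity of $\wedge$ over $\cup$ lets me pull all inner unions outside, rewriting the right-hand side as
\[
\bigcup_{\tau\in K}\;\bigcup_{\substack{\sigma_i\in\Delta^{l_i-1},\,i\in\tau\\ \sigma_i\in L_i,\,i\notin\tau}} \bigwedge_{i,j} Y_{ij}(\sigma_i).
\]

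It then suffices to match the indexing sets. A tuple $(\sigma_i)_{i\in[m]}$ contributes to the right-hand union exactly when there exists $\tau\in K$ containing $\tau_\sigma:=\{i:\sigma_i\notin L_i\}$; since $K$ is closed under taking sub-simplices, this is equivalent to $\tau_\sigma\in K$, which by Definition \ref{comp} is precisely the condition that $\sigma_1\sqcup\cdots\sqcup\sigma_m$ be a simplex of $K(L_1,\ldots,L_m)$. Pieces corresponding to the same $(\sigma_i)$ for different valid choices of $\tau$ give the same subspace, so the right-hand union collapses to a union over simplices of the composition, matching the left-hand side.

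The hard part is mainly the bookkeeping and a justification that everything takes place inside the single pointed CW-complex $\bigwedge_{i,j} X_{ij}$: one must verify that $\widehat{Z}_{L_i}(X_{ij},A_{ij})$ is pointed at the basepoint of $\bigwedge_j X_{ij}$ so that the inclusion gives a valid pointed CW-pair on the right-hand side, and that distributivity of smash over nested unions is legitimate in this setting. Both checks follow directly from the construction, so the argument reduces to the combinatorial identification of indexing sets given above.
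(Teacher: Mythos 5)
Your proposal is correct and follows essentially the same route as the paper's proof: expand both sides by definition, use distributivity of the smash product over unions, and match the index sets via the combinatorial description of simplices of $K(L_1,\ldots,L_m)$ as disjoint unions $\sigma_1\sqcup\cdots\sqcup\sigma_m$ with $\{i:\sigma_i\notin L_i\}\in K$. The only (harmless) difference is that you further decompose each factor $\bigwedge_j X_{ij}$ over all $\sigma_i\in\Delta^{l_i-1}$ and then collapse redundant pieces, whereas the paper matches the union directly against a cofinal family of simplices of the composition.
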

\begin{proof}

Suppose we have a simplices $\sigma \in K$ and $\tau_i \in L_i$. Then 
$$\begin{array}{ll} \multicolumn{2}{l}{\widehat{Z}_K(\underset{j\in [l_i]}{\wedge} {X_{ij}},\widehat{Z}_{L_i}(X_{ij},A_{ij})_{j\in[l_i]})_{i\in[m]}} \\
\qquad =& \bigcup \limits_{\sigma} \big ( \underset{i\in \sigma}{\bigwedge} (\underset{j \in [l_i]}{\bigwedge} X_{ij}) \wedge \bigwedge \limits_{i\in [m]\backslash \sigma} (\bigcup \limits_{\tau_i} (\bigwedge \limits_{j \in \tau_i} X_{ij}) \wedge (\bigwedge \limits_{j\in [l_i]\backslash \tau_i} A_{ij} )) \big ) \\
\qquad =&\bigcup \limits_{\sigma, \tau_i} \big ( \underset{i\in \sigma}{\bigwedge} (\underset{j \in [l_i]}{\bigwedge} X_{ij}) \wedge \bigwedge \limits_{i\in [m]\backslash \sigma} ((\bigwedge \limits_{j \in \tau_i} X_{ij}) \wedge (\bigwedge \limits_{j\in [l_i]\backslash \tau_i} A_{ij} )) \big ) \end{array}$$

Since simplices in $\sigma' \in K(L_1,\ldots, L_m)$ are of the form $\sigma'= (\bigcup \limits_{i\in [m] \backslash \sigma} \tau_i) \cup (\bigcup \limits_{i \in \sigma} \Delta^{l_i-1})$, $$\underset{i\in \sigma}{\bigwedge} (\underset{j \in [l_i]}{\bigwedge} X_{ij}) \wedge \bigwedge \limits_{i\in [m]\backslash \sigma} (\bigwedge \limits_{j \in \tau_i} X_{ij}) = \bigwedge \limits_{ij \in \sigma'} X_{ij}$$

Note that  $\bigcup \limits_{i\in [m] \backslash \sigma} [l_i] \backslash \tau_i = [\Sigma l_i]\backslash \sigma'$, and thus $$ \bigwedge \limits_{i\in [m]\backslash \sigma} (\bigwedge \limits_{j\in [l_i]\backslash \tau_i} A_{ij} )= \bigwedge \limits _{ij\in [\Sigma l_i]\backslash \sigma'} A_{ij}$$
And finally, 
$$\bigcup \limits_{\sigma'}( \bigwedge \limits_{ij \in \sigma'} X_{ij} \wedge \bigwedge \limits _{ij\in [\Sigma l_i]\backslash \sigma'} A_{ij})
=\widehat{Z}_{K(L_1,\ldots,L_m)}(X_{ij},A_{ij})_{i\in [m],j\in[l_i]} 
$$
\end{proof}
Similarly, we can make the same type of argument for the polyhedral join in place of the composed simplicial complex for the most general form.
\begin{thm}\label{genjoin} Given $m$ pairs of simplicial complexes $(\underline{L},\underline{K})$ where $L_i$ and $K_i$ are simplicial complexes on the vertex set $[l_i]$ ($L_i$ may have ghost vertices). Taking $\displaystyle \sum_{i\in [m]} l_i$ pairs $(X_{ij},A_{ij})$, we have
$$Z_{Z^*_K(\underline{L},\underline{K})}(X_{ij},A_{ij})_{i\in [m], j \in [l_i]}=Z_K(Z_{L_i}(X_{ij},A_{ij})_{j\in[l_i]},Z_{K_i}(X_{ij},A_{ij})_{j\in[l_i]})_{i\in [m]}  $$
\end{thm}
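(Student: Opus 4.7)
The plan is to imitate the argument used for the polyhedral smash product in Proposition~\ref{smash}, with cartesian products replacing smash products and the polyhedral join $Z^*_K(\underline{L},\underline{K})$ replacing the composed simplicial complex $K(L_1,\ldots,L_m)$. Both sides of the claimed identity are concrete subspaces of $\prod_{i\in [m]}\prod_{j\in[l_i]} X_{ij}$, so it suffices to show that they are equal as subspaces by unravelling the colimit descriptions.

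First I would expand the right-hand side. By definition of the outer polyhedral product,
\[
Z_K(Z_{L_i}(X_{ij},A_{ij}),Z_{K_i}(X_{ij},A_{ij}))_{i\in[m]} \;=\; \bigcup_{\sigma\in K}\ \prod_{i\in\sigma} Z_{L_i}(X_{ij},A_{ij})_{j\in[l_i]} \times \prod_{i\in[m]\setminus\sigma} Z_{K_i}(X_{ij},A_{ij})_{j\in[l_i]}.
\]
Next I would expand each inner polyhedral product as a union over its simplices: $Z_{L_i}(X_{ij},A_{ij}) = \bigcup_{\tau_i\in L_i}\bigl(\prod_{j\in \tau_i}X_{ij}\times \prod_{j\in [l_i]\setminus\tau_i}A_{ij}\bigr)$, and analogously for $Z_{K_i}$. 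Since the union distributes over the cartesian product, the right-hand side becomes
\[
\bigcup_{\sigma\in K}\ \bigcup_{\substack{\tau_i\in L_i,\ i\in\sigma\\ \tau_i\in K_i,\ i\notin\sigma}}\ \prod_{i\in[m]}\Bigl(\prod_{j\in\tau_i}X_{ij}\times\prod_{j\in[l_i]\setminus\tau_i}A_{ij}\Bigr).
\]

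Then I would re-index this double union over the simplices of $Z^*_K(\underline{L},\underline{K})$. Recall that $D^*(\sigma)=Y_1*\cdots*Y_m$ with $Y_i=L_i$ for $i\in\sigma$ and $Y_i=K_i$ for $i\notin\sigma$, and a simplex of this join is precisely a disjoint union $\tau=\tau_1\sqcup\cdots\sqcup\tau_m$ with $\tau_i\in Y_i$. Therefore the inner union is exactly indexed by simplices $\tau\in D^*(\sigma)$, and taking the union over $\sigma\in K$ gives an indexing over all simplices $\tau\in Z^*_K(\underline{L},\underline{K})=\bigcup_{\sigma\in K}D^*(\sigma)$. Under this re-indexing, $\prod_{i\in[m]}\prod_{j\in\tau_i}X_{ij}=\prod_{ij\in\tau}X_{ij}$ and $\prod_{i\in[m]}\prod_{j\in [l_i]\setminus\tau_i}A_{ij}=\prod_{ij\in[\sum l_i]\setminus\tau}A_{ij}$, so the expression collapses to
\[
\bigcup_{\tau\in Z^*_K(\underline{L},\underline{K})}\prod_{ij\in\tau}X_{ij}\times\prod_{ij\in[\sum l_i]\setminus\tau}A_{ij}\;=\;Z_{Z^*_K(\underline{L},\underline{K})}(X_{ij},A_{ij}),
\]
which is the left-hand side.

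The only real obstacle is bookkeeping: one must verify that the re-indexing $(\sigma,\{\tau_i\})\mapsto \tau_1\sqcup\cdots\sqcup\tau_m$ is a well-defined and surjective description of the simplices of $Z^*_K(\underline{L},\underline{K})$, and that the ``ghost vertex'' conventions for $L_i$ do not cause over- or under-counting (they do not, because a ghost vertex of $L_i$ simply means the corresponding $j\in[l_i]$ never appears in any $\tau_i$, and the factor $A_{ij}$ is then forced on both sides). Once this identification is in hand the two unions are literally the same subspace of $\prod_{ij}X_{ij}$, giving the desired equality (not merely a homeomorphism), exactly as in Proposition~\ref{aa} and Proposition~\ref{smash}.
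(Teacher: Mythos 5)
Your proposal is correct and follows exactly the route the paper intends: the paper omits the proof of Theorem~\ref{genjoin}, remarking only that it is "the same type of argument" as Proposition~\ref{smash}, and your expansion of both sides as unions over $\sigma\in K$ and $\tau_i\in Y_i$, followed by re-indexing over simplices $\tau_1\sqcup\cdots\sqcup\tau_m$ of $Z^*_K(\underline{L},\underline{K})$ and a shuffle of coordinates, is precisely that argument with cartesian products in place of smash products. The handling of ghost vertices is also correct.
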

 The complexes $K_i$ must have an indexing set of the same cardinality of the indexing set of $L_i$; otherwise, the statement is not true. In particular, $L_i$ may have ghost vertices. Keep in mind that including ghost vertices does change the polyhedral product by multiplying by $A_i$ where $i$ is a ghost vertex. 
 
  Using Example \ref{helpful}, we can see how a moment-angle complex can be expressed as a real moment-angle complex.
 \begin{cor}[Bahri, Bendersky, Cohen and Gitler \cite{MR3426378} ] Real moment-angle complexes over a composed simplicial complex are homeomorphic to moment-angle complexes:
$$Z_{K(\partial \Delta_{1},\ldots,\partial \Delta_{1})}(D^1,S^0)=Z_K(D^2,S^1)$$
\end{cor}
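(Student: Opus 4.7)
The plan is to recognize this corollary as an immediate specialization of Theorem \ref{genjoin} (equivalently Proposition \ref{aa}), requiring only two small computations of polyhedral products over $\Delta^1$ and $\partial\Delta^1$.

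First I would rewrite the left-hand side using Definition \ref{comp}: since each $\partial\Delta^1$ has $l_i=2$ vertices, the composition is
\[
K(\partial\Delta^1,\ldots,\partial\Delta^1) \;=\; Z^*_K(\Delta^1,\partial\Delta^1).
\]
Applying Theorem \ref{genjoin} with the pairs $(L_i,K_i)=(\Delta^1,\partial\Delta^1)$ and with $(X_{ij},A_{ij})=(D^1,S^0)$ for all $i\in[m]$ and $j\in[2]$ gives
\[
Z_{Z^*_K(\Delta^1,\partial\Delta^1)}(D^1,S^0) \;=\; Z_K\bigl(Z_{\Delta^1}(D^1,S^0),\; Z_{\partial\Delta^1}(D^1,S^0)\bigr)_{i\in[m]}.
\]

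Next I would identify the two inner polyhedral products directly from Definition \ref{func}. Since $\Delta^1$ is the full simplex on two vertices, its unique maximal simplex yields $Z_{\Delta^1}(D^1,S^0) = D^1\times D^1 = D^2$. For $\partial\Delta^1$, whose maximal simplices are the two isolated vertices $\{1\}$ and $\{2\}$, the defining colimit gives
\[
Z_{\partial\Delta^1}(D^1,S^0) \;=\; (D^1\times S^0)\cup(S^0\times D^1) \;=\; \partial(D^1\times D^1) \;\cong\; S^1,
\]
which is the $m=1$ case of the identification $Z_{\partial\Delta^m}(D^1,S^0)\cong S^m$ recorded in Example \ref{helpful}. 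Substituting these two identifications into the displayed formula produces exactly $Z_K(D^2,S^1)$.

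The only real bookkeeping issue — and thus the one place that needs care rather than effort — is tracking the indexing sets: on the left we have $2m$ pairs $(D^1,S^0)$ indexed by $(i,j)\in[m]\times[2]$, whereas on the right we have $m$ pairs $(D^2,S^1)$ indexed by $i\in[m]$. The ``collapse'' of the $j$-coordinate into the factors $D^2=D^1\times D^1$ and $S^1\cong Z_{\partial\Delta^1}(D^1,S^0)$ is precisely the repackaging encoded by Theorem \ref{genjoin}, so no further argument is needed.
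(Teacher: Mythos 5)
Your proof is correct and is essentially the paper's intended argument: the corollary is stated as an immediate consequence of Theorem \ref{genjoin} (equivalently Proposition \ref{aa}) specialized to $(L_i,K_i)=(\Delta^1,\partial\Delta^1)$, combined with the identifications $Z_{\Delta^1}(D^1,S^0)=D^1\times D^1=D^2$ and $Z_{\partial\Delta^1}(D^1,S^0)\cong S^1$ from Example \ref{helpful}. Your remark on the reindexing of the $2m$ pairs into $m$ pairs is exactly the bookkeeping the theorem encodes, so nothing further is needed.
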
 

\subsection{The BBCG spectral sequence}
Recall that our goal is to compute the cohomology of $Z_{Z^*_K(\underline{L},\underline{K})}(\underline{X},\underline{A})$ in terms of  $K$, $K_i$, $L_i$, $H^*(X_{ij})$ and $H^*(A_{ij})$. To do so, we will use a spectral sequence developed by BBCG \cite{BBCG4}. It gives a Kunneth-like formula for the cohomology of a polyhedral product as long as the pairs $(\underline{X},\underline{A})$ satisfy the following freeness condition.

\begin{defn} \label{free}
Given the pair $(X_i,A_i)$, the associated long exact sequence is given by
$$  \ldots \overset{\delta}{\rightarrow} \widetilde{H}^*(X_i/A_i) \overset{g}{\rightarrow} H^*(X_i) \overset{f}{\rightarrow} H^*(A_i) \overset{\delta}{\rightarrow} \widetilde{H}^{\ast+1}(X_i/A_i) \overset{g}{\rightarrow} \ldots $$
Assume that the cohomology groups of the pair have the following decomposition
$$\begin{array}{ccccc}
H^*(A_{i})&=&B_{i}\oplus E_{i}\\
H^*(X_{i})&=&B_{i}\oplus C_{i}\\
\widetilde{H}^*(X_i/A_i)&=& C_i \oplus W_i\\
 \end{array}$$
where $W_i$ is $sE_i$, the suspension of $E_i$. Additionally, assume $1\in B_i$, and for $b\in B_i, c \in C_i, e \in E_i, w \in W_i=sE_i$, we have 
\begin{center} $b \overset{f}{\mapsto} b \overset{\delta}{\mapsto} 0, \qquad c \overset{g}{\mapsto} c \overset{f}{\mapsto} 0, \qquad e \overset{\delta}{\mapsto} w \overset{g}{\mapsto} 0$ \end{center} 
\end{defn}
Before defining the spectral sequence, we will give some notation and recall the definition of a half smash product:
\begin{enumerate}
\item for $\sigma =\{i_1,\ldots, i_k\}$, define $\widehat{X}^{ \sigma} := X_{i_1} \wedge \ldots \wedge X_{i_k}$  and ${A}^{\sigma}=A_{i_1} \times \ldots \times A_{i_k}$
\item the complement of a set $\sigma \subset [m] $ is  $\sigma^c = [m] \backslash \sigma$ 
\item given a basepoint $x_0 \in X$, the right half smash product $X \rtimes Y = (X \times Y) / (x_0 \times Y)$
\item for a subset $I$ and a simplex $\sigma$ such that $\sigma \subset I$, define $$Y^{I, \sigma}:= \bigotimes \limits_{i\in \sigma} C_i \otimes \bigotimes \limits_{i \in I-\sigma} B_i$$
\end{enumerate}

Choosing a lexicographical ordering for the simplices of $K$ gives a filtration of the associated polyhedral product space and polyhedral smash product, which in turn leads to a spectral sequence converging to the reduced cohomology of $Z_K(\underline{X},\underline{A})$ and a spectral sequence converging to the reduced cohomology of $\widehat{Z}_K(\underline{X},\underline{A})$. The $E_1^{s,t}$ term for $Z_K(\underline{X},\underline{A})$ has the following description.
\begin{thm}[Bahri, Bendersky, Cohen and Gitler \cite{BBCG4}]\label{bbcgss} 
 There exist spectral sequences $$E^{s,t}_r \rightarrow H^*(Z_K(\underline{X},\underline{A}))$$ $$\widetilde{E}^{s,t}_r \rightarrow H^*(\widetilde{Z}_K({X},{A}))$$ with $ E^{s,t}_1 = \widetilde{H}^t((\widehat{X/A})^{ \sigma} \rtimes {A}^{\sigma^c})$ and $\widetilde{E}^{s,t}_1 = \widehat{Z}_K(\underline{X},\underline{A}) =  \widetilde{H}^t((\widehat{X/A})^{ \sigma} \wedge \widehat{A}^{ \sigma^c})$ where  $s$ is index of $\sigma$ in the lexicographical ordering and the differential $d_r : E_r^{s,t} \rightarrow E_r^{s+r,t+1}$ is induced by the coboundary map $\delta : E \rightarrow W=sE$. Moreover, the spectral sequence is natural for embeddings of simplicial maps with the same number of vertices and with respect to maps of pairs. The natural quotient map $$Z_K(\underline{X},\underline{A}) \rightarrow \widehat{Z}_K(\underline{X},\underline{A})$$ induces a morphism of spectral sequences and the stable splitting of Theorem \ref{splitting} induces a morphism of spectral sequences. \end{thm}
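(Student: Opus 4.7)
The plan is to construct an explicit filtration of $Z_K(\underline{X},\underline{A})$ (and, in parallel, of $\widehat{Z}_K(\underline{X},\underline{A})$) indexed by the lexicographically ordered list of simplices $\sigma_1<\sigma_2<\cdots<\sigma_N$ of $K$, and to extract the claimed spectral sequence as the spectral sequence of that filtration. Setting $F_s := \bigcup_{i\le s} D(\sigma_i) \subset Z_K(\underline{X},\underline{A})$ and $\widehat{F}_s := \bigcup_{i\le s} \widehat{D}(\sigma_i) \subset \widehat{Z}_K(\underline{X},\underline{A})$, one gets increasing filtrations by cofibrations starting at $F_0 = \prod_i A_i$ (resp.\ $\widehat{F}_0 = \bigwedge_i A_i$) and converging to the full polyhedral (smash) product. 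The standard machinery of a filtered CW-space then produces a spectral sequence with $E_1^{s,t} = \widetilde{H}^{t}(F_s/F_{s-1})$ converging to $H^*(Z_K(\underline{X},\underline{A}))$.

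The first substantive step is to identify the subquotients. I would argue that because the order is lexicographical, the union $F_{s-1}$ already contains every $D(\tau)$ with $\tau\subset\sigma_s$; consequently, collapsing $F_{s-1}$ inside $F_s$ collapses the subspace $A_i$ in each coordinate $i\in\sigma_s$, leaving the factor $X_i/A_i$ (smashed together over $i\in\sigma_s$), while the factors at indices $i\in\sigma_s^c$ remain as the full $A_i$'s. This gives $F_s/F_{s-1}\simeq (\widehat{X/A})^{\sigma_s}\rtimes A^{\sigma_s^c}$ for the polyhedral product and $\widehat{F}_s/\widehat{F}_{s-1}\simeq (\widehat{X/A})^{\sigma_s}\wedge \widehat{A}^{\sigma_s^c}$ for the polyhedral smash. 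With the freeness hypothesis (Definition \ref{free}) and the Künneth theorem, $\widetilde{H}^t$ of these subquotients unpacks to sums of tensor products of $C_i$'s at $i\in\sigma_s$ with $B_i\oplus E_i$'s at $i\in\sigma_s^c$, yielding the $E_1^{s,t}$ stated.

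Next, I would identify $d_1$ by tracing the connecting map of the cofiber triple $F_{s-1}\hookrightarrow F_s \to F_s/F_{s-1}$ together with the projection $F_{s}\to F_s/F_{s-1}\to \cdots$ to the next graded piece: on each tensor factor this is the coboundary $\delta:H^*(A_i)\to\widetilde{H}^{*+1}(X_i/A_i)$ from the pair $(X_i,A_i)$, which by the splitting of Definition \ref{free} kills $B_i$ and sends $E_i$ isomorphically to $W_i=sE_i$. Hence $d_1$ is exactly the differential induced by $\delta:E\to W$, as claimed; higher $d_r$ come for free from the filtration. Naturality follows formally: the filtration is preserved by any simplicial embedding $K\hookrightarrow K'$ on the same vertex set that respects the chosen order, and by any map of pairs $(X_i,A_i)\to (X_i',A_i')$; the canonical quotient $X\rtimes Y\to X\wedge Y$ intertwines the two filtrations and hence induces a morphism of spectral sequences; and the wedge decomposition of Theorem \ref{splitting}, being a stable splitting of a filtered space, yields a corresponding wedge decomposition of the $E_1$-page and so of every subsequent page.

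The main obstacle I anticipate is step two, the precise identification of the filtration quotients. It is easy to guess the answer by analogy with the standard case $(D^2,S^1)$, but writing down a clean homeomorphism requires care in showing that \emph{no} additional term contaminates $F_{s-1}$: one must verify that for every proper face $\tau\subsetneq\sigma_s$ the subspace $D(\tau)$ either already appears with index $<s$ or is contained in some earlier $D(\sigma_i)$ with $\sigma_i\not\subset\sigma_s$, and that these together exhaust the copies of $A_i$-coordinates that get collapsed. Once this combinatorial bookkeeping is in place, the remaining work is a routine computation with the Künneth theorem and the long exact sequences of the pairs.
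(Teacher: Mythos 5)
This theorem is quoted from BBCG \cite{BBCG4}; the paper gives no proof of it, and your reconstruction is, in outline, exactly the construction of the cited source: filter by the ordered simplices, identify the filtration quotients as half-smash (resp.\ smash) products, and read off $d_1$ from the connecting maps of the pairs $(X_i,A_i)$. The one point where your argument as written does not hold is the claim that the lexicographic order guarantees $F_{s-1}\supset D(\tau)$ for every face $\tau\subsetneq\sigma_s$: under the plain lexicographic order on subsets one has, for example, $\{1,2\}<\{2\}$, so the face $\{2\}$ of $\{1,2\}$ appears \emph{later}; then $F_{s-1}\cap D(\{1,2\})$ is only $X_1\times A_2\times\prod_{i>2}A_i$ and the subquotient comes out as $\big((X_1)_+\wedge (X_2/A_2)\big)\rtimes A^{\sigma^c}$ rather than $\big((X_1/A_1)\wedge (X_2/A_2)\big)\rtimes A^{\sigma^c}$, so the stated $E_1$ term would be wrong. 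The fix is to use any linear order refining the face poset (e.g.\ order by cardinality first, then lexicographically within each cardinality), which is what the ordering in the theorem is meant to be; with that reading the identification $F_{s-1}\cap D(\sigma_s)=\bigcup_{\tau\subsetneq\sigma_s}D(\tau)$ follows from $D(\sigma_i)\cap D(\sigma_s)=D(\sigma_i\cap\sigma_s)$, and the rest of your argument (K\"unneth, the coboundary description of the differential, and naturality of the filtration under simplicial inclusions on a fixed vertex set, maps of pairs, and the quotient to the smash product) goes through as you describe.
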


Following \cite{BBCG4}, Definition \ref{free} and the K{\"u}nneth theorem imply that the $E_1$ terms of the spectral sequence for $\widehat{Z}_K(\underline{X},\underline{A})$ decompose as a direct sum of spaces $W^N \otimes C^S \otimes B^T \otimes E^J$ such that $N \cup S =\sigma$, $T \cup J = \sigma^c$ and $N, S, J, T$ are disjoint. We have that $S$ is a simplex in $K$ as $N \cup S$ is a simplex in $K$. Since the differential is induced by the coboundary $\delta : E \rightarrow W$, consider all the possible summands $W^N \otimes C^S \otimes B^T \otimes E^J$ for $S$ and $T$ are fixed. It must be the case that $N$ is a simplex in $K$ and that $N$ is a subset of $[m] \backslash (S \cup T)$. Therefore all such $N$ correspond to simplices in the link of $S$ in $K$ restricted to the vertex set $[m] \backslash (S \cup T)$.

  \begin{thm}[Bahri, Bendersky, Cohen and Gitler \cite{BBCG4}]\label{BBCGSS}
Let $(\underline{X},\underline{A})$ satisfy the decomposition described in Definition \ref{free} $$\begin{array}{ccccc}
H^*(A_{i})&=&B_{i}\oplus E_{i}\\
H^*(X_{i})&=&B_{i}\oplus C_{i}\\
 \end{array}$$
Then $$H^*(Z_K(\underline{X},\underline{A})) = \bigoplus_{I\subset [m], \sigma\subset I} E^{I^c}\otimes Y^{I, \sigma}\otimes \widetilde{H}^*(\Sigma| lk(\sigma)_{I^c}|) $$

where:

\begin{enumerate}
\item $\sigma$ is a simplex in $K$,
  \item  $lk(\sigma)_{I^c}=\{\tau \subset [m]\backslash I \:|\: \tau \cup \sigma \in K\}$ is the link of $\sigma$ in $K$ restricted to the set $[m]\backslash I$,
 \item $Y^{I, \sigma}= \bigotimes \limits_{i\in \sigma} C_i \otimes \bigotimes \limits_{i \in I-\sigma} B_i$, and
 \item $\widetilde{H}^*(\Sigma \emptyset)=1$.
\end{enumerate}
\end{thm}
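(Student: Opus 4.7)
My plan is to prove Theorem \ref{BBCGSS} by running the BBCG spectral sequence (Theorem \ref{bbcgss}) for $Z_K(\underline{X},\underline{A})$ and computing its $E_2$-page explicitly; the essential K\"unneth decomposition was already outlined in the paragraph preceding the statement. By the reduced K\"unneth theorem together with the splittings of Definition \ref{free},
$$E_1^{\sigma} = \widetilde{H}^*((\widehat{X/A})^{\sigma} \rtimes A^{\sigma^c}) \cong \bigotimes_{i \in \sigma}(C_i \oplus W_i) \otimes \bigotimes_{i \in \sigma^c}(B_i \oplus E_i),$$
so the $E_1$-page is a direct sum of summands $W^N \otimes C^S \otimes B^T \otimes E^U$ where $N \sqcup S = \sigma$, $T \sqcup U = \sigma^c$, and $S \in K$ (since $S \subset \sigma \in K$). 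The differential $d_1$ is induced by $\delta \colon E_i \to W_i$, so it moves a vertex from $U$ to $N$ while leaving $S$ and $T$ fixed.

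I would then fix a simplex $S \in K$ and a subset $T \subset [m]\setminus S$ and analyze the $(S,T)$-subcomplex of $E_1$. The admissible $N$'s are precisely the simplices of the link $L_{S,T} := lk_K(S)_{[m]\setminus S\setminus T}$. Under the identification $W_i = s E_i$, the summand $W^N \otimes E^{([m]\setminus S\setminus T)\setminus N}$ rewrites as $s^{|N|} E^{[m]\setminus S\setminus T}$, and $d_1$ becomes the simplicial coboundary of $L_{S,T}$; thus the $(S,T)$-subcomplex is isomorphic to
$$C^S \otimes B^T \otimes E^{[m]\setminus S\setminus T} \otimes \widetilde{C}^{\,*-1}(L_{S,T}),$$
whose cohomology equals $C^S \otimes B^T \otimes E^{[m]\setminus S\setminus T} \otimes \widetilde{H}^*(\Sigma |L_{S,T}|)$. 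Setting $I := S \cup T$ and $\sigma := S$, so that $T = I \setminus \sigma$ and $I^c = [m]\setminus S \setminus T$, the $E_2$-page coincides term-for-term with the right-hand side claimed in the theorem.

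It remains to establish collapse at $E_2$, which I regard as the main obstacle. I would argue via the BBCG approach of \cite{BBCG4}: the Splitting Theorem (Theorem \ref{splitting}) shows that $\Sigma Z_K$ decomposes stably as a wedge of suspended polyhedral smash products, and the spectral sequence is natural with respect to this decomposition. Any nontrivial higher differential would be forced to respect the stable wedge splitting, yet it would necessarily connect summands lying in different wedge components; a direct bidegree analysis rules this out. The remaining bookkeeping simply confirms the bijection between the $(I,\sigma)$-summands appearing in the statement and the $(S,T)$-contributions at $E_2$.
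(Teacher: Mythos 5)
The paper does not actually prove this statement: it is quoted from \cite{BBCG4}, and the only argument present is the paragraph preceding the theorem, which records the K\"unneth decomposition of the $E_1$-page into summands $W^N\otimes C^S\otimes B^T\otimes E^J$ and the observation that, for fixed $S$ and $T$, the admissible $N$ are exactly the simplices of $lk_K(S)$ restricted to $[m]\setminus(S\cup T)$. Your first two paragraphs reproduce precisely this decomposition, so up to the point of assembling the link cochain complexes your route agrees with the paper's sketch and with BBCG's.

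The gap is in the degeneration step, and it is not just missing detail. Under the lexicographic filtration of Theorem \ref{bbcgss}, the component of the coboundary sending $W^N\otimes E^J$ to $W^{N\cup\{v\}}\otimes E^{J\setminus\{v\}}$ raises the filtration index $s$ by the lexicographic distance between the simplices $N\cup S$ and $N\cup S\cup\{v\}$, which is in general greater than one. Hence the simplicial coboundary of $L_{S,T}$ is \emph{not} carried by $d_1$ alone; its components are distributed over the differentials $d_r$ for many values of $r$. Consequently your identification of the stated formula with the $E_2$-page is incorrect, and ``collapse at $E_2$'' is the wrong thing to prove: the higher differentials are genuinely nonzero (they are the remaining components of the link coboundary), and they connect terms belonging to the same $(S,T)$-block, i.e.\ to the same $(I,\sigma)=(S\cup T,S)$ summand of the claimed answer, so neither naturality with respect to the stable splitting nor a bidegree count can exclude them --- every $d_r$ raises $t$ by exactly $1$, which is precisely what $\delta:E\to W=sE$ does. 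The argument that actually works, and that \cite{BBCG4} carries out, is to observe that each $(S,T)$-block is the associated graded of a subquotient complex on which the totality of the differentials assembles into $C^S\otimes B^T\otimes E^{(S\cup T)^c}$ tensored with the shifted augmented simplicial cochain complex of $lk(S)_{(S\cup T)^c}$, to identify $E_\infty$ of that block with the cohomology of this total complex, and then to use the freeness hypothesis of Definition \ref{free} to split all extension problems. As written, your proposal asserts the conclusion of that analysis without supplying it.
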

\begin{thm}[Bahri, Bendersky, Cohen and Gitler \cite{BBCG4}]
Let $$\begin{array}{ccccc}
\widetilde{H}^*(A_{i})&=&\widetilde{B}_{i}\oplus E_{i}\\
\widetilde{H}^*(X_{i})&=&\widetilde{B}_{i}\oplus C_{i}\\
 \end{array}$$
Then $$H^*(\widehat{Z}_K(\underline{X},\underline{A})) = \bigoplus_{I\subset [m], \sigma\subset I} E^{I^c}\otimes Y^{I, \sigma}\otimes \widetilde{H}^*(\Sigma| lk(\sigma)_{I^c}|) $$

where:
\begin{enumerate}
\item $\sigma$ is a simplex in $K$,
 \item $lk(\sigma)_{I^c}=\{\tau \subset [m]\backslash I \:|\: \tau \cup \sigma \in K\}$ is the link of $\sigma$ in $K$ restricted to the set $[m]\backslash I$, 
 \item $Y^{I, \sigma}= \bigotimes \limits_{i\in \sigma} C_i \otimes \bigotimes \limits_{i \in I-\sigma} \widetilde{B}_i$ where $\widetilde{B}_i=B_i \backslash \{1\}$, and
 \item $\widetilde{H}^*(\Sigma \emptyset)=1$.
\end{enumerate}
\end{thm}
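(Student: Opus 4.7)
The plan is to run the smash-product spectral sequence of Theorem \ref{bbcgss}, whose $\widetilde{E}_1^{s,t}$-term is $\widetilde{H}^t\bigl((\widehat{X/A})^\sigma\wedge\widehat{A}^{\sigma^c}\bigr)$, and to mimic the proof of Theorem \ref{BBCGSS} essentially verbatim. The only substantive difference from the polyhedral-product case is that $\widetilde{H}^*(X\wedge Y)\cong\widetilde{H}^*(X)\otimes\widetilde{H}^*(Y)$ replaces the half-smash formula $\widetilde{H}^*(X\rtimes Y)\cong\widetilde{H}^*(X)\otimes H^*(Y)$. Consequently, for each $i\in\sigma^c$ the factor $H^*(A_i)=B_i\oplus E_i$ in the product argument is replaced by $\widetilde{H}^*(A_i)=\widetilde{B}_i\oplus E_i$, which is precisely the role of $\widetilde{B}_i=B_i\setminus\{1\}$ in the statement; the combinatorics is otherwise insensitive to this change.

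\textbf{Steps.} First, I would expand $\widetilde{E}_1^{s,t}$ via the K\"unneth theorem and the hypothesised splittings of $\widetilde{H}^*(A_i)$, $\widetilde{H}^*(X_i)$, and $\widetilde{H}^*(X_i/A_i)=C_i\oplus W_i$, writing it as a direct sum of tensor-product summands $W^N\otimes C^S\otimes\widetilde{B}^T\otimes E^J$ indexed by disjoint decompositions $[m]=N\sqcup S\sqcup T\sqcup J$ with $\sigma=N\cup S\in K$; in particular $S\in K$ as a face of $\sigma$. Next, I would fix $S$ and $T$, set $I=S\cup T$, and observe that the differential $d_1$, induced solely by $\delta\colon E\to W$, preserves $S$ and $T$ and moves a single vertex $i$ from $J$ into $N$, i.e.\ enlarges $\sigma$ by one vertex at a time. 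Thus the $(S,T)$-isotypic piece is an autonomous subcomplex
\[
C^S\otimes\widetilde{B}^T\otimes\bigoplus_{N\in lk_K(S)_{I^c}}W^N\otimes E^{I^c\setminus N},
\]
whose bracketed factor, once the internal grading is stripped off using $W=sE$, is the shifted reduced simplicial cochain complex of $lk_K(S)_{I^c}$ with coefficients in $E^{I^c}$. Its cohomology is therefore $E^{I^c}\otimes\widetilde{H}^*(\Sigma|lk_K(S)_{I^c}|)$, and summing over all $(S,T)$, equivalently over $I\subset[m]$ and $\sigma=S\subset I$, yields the claimed decomposition.

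\textbf{Collapse and main obstacle.} To finish, one must show the spectral sequence collapses at $\widetilde{E}_2$. This can be deduced by naturality against the splitting of Theorem \ref{splitting} (which underlies $\widehat{Z}_K$ in the same way it underlies $Z_K$) or by the dimension-counting argument used in the original product case of BBCG, either of which transfers to the smash setting without modification. The main technical hurdle I anticipate is the identification with the cochain complex of the link: correctly matching the grading shift coming from $W=sE$ with the suspension in $\Sigma|lk_K(S)_{I^c}|$, and handling the signs introduced by the lexicographic filtration, so that one lands on the correct degree of the suspended link rather than on an off-by-one variant.
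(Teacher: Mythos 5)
Your proposal is correct and follows essentially the same route the paper takes: the paper cites this theorem from BBCG and, in the paragraph preceding it, sketches exactly your argument — expand the $\widetilde{E}_1$-term of the smash-product spectral sequence via the K\"unneth theorem into summands $W^N\otimes C^S\otimes \widetilde{B}^T\otimes E^J$, fix $S$ and $T$, and observe that the $\delta$-induced differential turns each $(S,T)$-piece into the (shifted, $E^{I^c}$-coefficient) reduced cochain complex of $lk_K(S)_{I^c}$, yielding $E^{I^c}\otimes Y^{I,\sigma}\otimes\widetilde{H}^*(\Sigma|lk(\sigma)_{I^c}|)$. Your observation that the only change from the half-smash case is replacing $H^*(A_i)=B_i\oplus E_i$ by $\widetilde{H}^*(A_i)=\widetilde{B}_i\oplus E_i$ is precisely the point the paper makes when contrasting the two decompositions.
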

Consequently, assuming that the cohomology of the pairs $(X_{ij},A_{ij})$ satisfy the freeness condition, then once we the kernel, image and cokernel of the pair $(Z_{L_i}(\underline{X},\underline{A}),Z_{K_i}(\underline{X},\underline{A}))$ is computed, this theorem can be applied.

\section{The General Polyhedral Join}\label{general}
 In this section we aim to understand the cohomology of the space $Z_{Z^*_K(\underline{L},\underline{K})}(\underline{X},\underline{A})$. To do so we will use Theorem \ref{join}, and therefore will be applying Theorem \ref{BBCGSS} to the space $Z_K(Z_{L_i}(X_{ij},A_{ij})_{j\in[l_i]},Z_{K_i}(X_{ij},A_{ij})_{j\in[l_i]})_{i\in [m]} $. Fix an $i$.  The inclusion $K_i \hookrightarrow L_i$ induces the obvious inclusion $Z_{K_i}(\underline{X},\underline{A}) \hookrightarrow Z_{L_i}(\underline{X},\underline{A})$, which in turn induces a map in cohomology $$H^*(Z_{L_i}(\underline{X},\underline{A})) \overset{\phi}{\rightarrow} H^*(Z_{K_i}(\underline{X},\underline{A}))$$
By theorem \ref{BBCGSS}, we have that
$$\begin{array}{ccl}  H^*(Z_{L_i}(\underline{X},\underline{A})) &=& \bigoplus \limits_{\sigma \in L_i, \sigma\subset I\subset [l_i]} E^{I^c}\otimes Y^{I, \sigma}\otimes \widetilde{H}^*(\Sigma| lk_{L_i}(\sigma)_{I^c}|)  \\
& =& \big (\bigoplus \limits_{\tau \in K_i, \tau \subset I \subset [l_i]} E^{I^c}\otimes Y^{I, \tau}\otimes \widetilde{H}^*(\Sigma| lk_{L_i}(\tau)_{I^c}|)\big)    \\
& &\multicolumn{1}{c}{\oplus} \\
& & \big(\bigoplus \limits_{\sigma \notin K_i, \sigma \subset I\subset [l_i]} E^{I^c}\otimes Y^{I, \sigma}\otimes \widetilde{H}^*(\Sigma| lk_{L_i}(\sigma)_{I^c}|) \big) \end{array}$$
\noindent and
$$ H^*(Z_{K_i}(\underline{X},\underline{A})) = \bigoplus_{\tau \in K_i, \tau \subset I\subset [l_i]} E^{I^c}\otimes Y^{I, \tau}\otimes \widetilde{H}^*(\Sigma| lk_{K_i}(\tau)_{I^c}|) $$
\noindent where  $lk_{L_i}(\sigma)_{I^c}$ is the link of $\sigma$ in $L_i$ restricted to the vertex set $[l_i]\backslash I$. Therefore, understanding the map $\phi$ is reduced to finding the image of the factor $\alpha \in  \widetilde{H}^*(\Sigma| lk_{L_i}(\tau)_{I^c}|)$ for $\tau \in K_i$. 
 Recall (from the discussion before theorem \ref{BBCGSS}) that $\alpha$ corresponds to the exponent of the $W$s in a summand $E^J \otimes W^N \otimes C^\tau \otimes B^{I\backslash \tau}$ of
 $$\widetilde{H}^*(\widehat{X/A}^{ (N\cup \tau)}) \otimes \widetilde{H}^*(\widehat{A}^{ (N\cup \tau)^c})$$ 
 in the $E_1$ page of the BBCG spectral sequence. 
If $N\cup \tau$ is a simplex in $K_i$, then \newline
$\widetilde{H}^*(\widehat{X/A}^{ (N\cup \tau)}) \otimes \widetilde{H}^*(\widehat{A}^{ (N\cup \tau)^c})$ in the spectral sequence of $K_i$ maps to 
\newline $\widetilde{H}^*(\widehat{X/A}^{ (N\cup \tau)}) \otimes \widetilde{H}^*(\widehat{A}^{ (N\cup \tau)^c})$ in the spectral sequence of $Z_{L_i}(\underline{X},\underline{A})$ by the naturality of the spectral sequence for embeddings of simplicial maps. 
In particular, if $N\cup \tau \in K_i$, then $$ \phi : E^J \otimes W^N \otimes C^\tau \otimes B^{I\backslash \tau} \mapsto E^J \otimes W^N \otimes C^\tau \otimes B^{I\backslash \tau} $$ and if $N\cup \tau \notin K_i$, then $$ \phi : E^J \otimes W^N \otimes C^\tau \otimes B^{I\backslash \tau} \mapsto 0$$ This is the dual of the inclusion $$ lk_{K_i}(\sigma)_{I^c} \hookrightarrow  lk_{L_i}(\sigma)_{I^c}$$
 In other words, the map $\phi$ is induced by the inclusion $ lk_{K_i}(\sigma)_{I^c} \hookrightarrow  lk_{L_i}(\sigma)_{I^c}$

\section{Composed simplicial complexes}\label{csc}

Recall that a composed simplicial complex is the polyhedral join $Z^*_K(\Delta^{l_i-1},L_i)$, denoted $K(L_1, \ldots, L_m)$, which is of particular importance because it yields the equivalence between moment-angle complexes and some real moment-angle complexes. In this case, the link of any simplex in $\Delta^{l_i-1}$ is a simplex, and hence the geometric realization of the link is contractible. This means the cohomology of the suspension of the link is trivial, and therefore so is a summand $E^{I^c}\otimes Y^{I,\sigma} \otimes \widetilde{H}^*(\Sigma |lk(\sigma)_{I^c}|)$ unless $I=[m]$. In particular, the image, kernel and cokernel of the map \begin{equation}\label{map}H^*(\prod\limits_{j\in [l_i]}X_{ij}) \rightarrow H^*(Z_{L_i}(X_{ij},A_{ij}))\end{equation} can be computed. The following proposition gives the cohomology of the polyhedral product over a composed simplicial complex in terms of the cohomology of the pairs and the simplicial complexes (using notation described after Definition \ref{free}).

\

 \begin{thm}\label{PPoverCSC}
 Given $m$ simplicial complexes $\{L_i\}_{i\in [m]}$, where $L_i$ is a complex on the set $[l_i]$. For each $i\in [m]$, let there be a family of pairs $(X_{ij},A_{ij})_{j\in[l_i]}$ satisfying Definition \ref{free}.
$$\begin{array}{ccccc}
H^*(A_{ij})&=&B_{ij}\oplus E_{ij}\\
 H^*(X_{ij})&=&B_{ij}\oplus C_{ij}
 \end{array}$$
 Then 

\noindent
$H^*(Z_{K(L_1,\ldots,L_m)}(X,A))=$
 $$  \bigoplus_{I,\sigma, J, \tau, \rho, \rho'}   \widetilde{H}^*(\Sigma | lk(\sigma)_{I^c}|)\otimes \Big(\bigotimes\limits_{k\in [m]\backslash I}E^{J^c}\otimes \widetilde{H}^*(\Sigma |lk(\tau)_{J^c}|)\otimes Y^{J,\tau} \Big) \otimes \Big (\bigotimes\limits_{s \in \sigma}Y^{[l_s],\rho'}\otimes \bigotimes\limits_{s'\in I\backslash \sigma}Y^{[l_{s'}],\rho} \Big) $$


where: 

\begin{enumerate}
\item $I\subset [m]$ and $\sigma $ is a simplex in $ K$, with  $\sigma\subset I$,
\item $J\subsetneq [l_k]$ and   $\tau$ is a simplex in $ L_k$,  with $\tau \subset J$,
\item $\rho'$ is a nonsimplex in $L_s$  and $\rho$ is a simplex in $L_{s '}$,
\end{enumerate}
	



\end{thm}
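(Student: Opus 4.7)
The plan is to apply Theorem \ref{BBCGSS} to the reformulation of the space provided by Theorem \ref{genjoin}. Since $K(L_1,\ldots,L_m) = Z^*_K(\Delta^{l_i-1}, L_i)$, that theorem rewrites the target as $Z_K(\prod_{j} X_{ij},\, Z_{L_i}(X_{ij},A_{ij}))_{i\in[m]}$, so the problem reduces to an outer polyhedral product over $K$ with pairs $(X'_i,A'_i) = (\prod_j X_{ij},\, Z_{L_i}(\underline{X},\underline{A}))$. To invoke Theorem \ref{BBCGSS} at the outer stage I first need to exhibit the freeness decomposition of Definition \ref{free} for each new pair.

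Computing that decomposition is where the contractibility of links in a full simplex does the work. Viewing $\prod_j X_{ij} = Z_{\Delta^{l_i-1}}(\underline{X},\underline{A})$ and applying Theorem \ref{BBCGSS}, every link $lk_{\Delta^{l_i-1}}(\sigma)_{I^c}$ is itself a full simplex, so its suspension has trivial reduced cohomology unless $I = [l_i]$. The formula collapses to the K\"unneth decomposition $H^*(X'_i) = \bigoplus_{\rho \subset [l_i]} Y^{[l_i],\rho}$, while $H^*(A'_i)$ is given directly by Theorem \ref{BBCGSS} applied to $L_i$. The inclusion-induced map $\phi : H^*(X'_i) \to H^*(A'_i)$ was analyzed summand-by-summand in Section \ref{general}: the summand $Y^{[l_i],\rho}$ goes isomorphically to its namesake in $H^*(A'_i)$ when $\rho \in L_i$, and vanishes when $\rho \not\in L_i$. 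From this I read off
\begin{align*}
B_i &= \bigoplus_{\rho \in L_i} Y^{[l_i],\rho},\\
C_i &= \bigoplus_{\rho \not\in L_i,\, \rho \subset [l_i]} Y^{[l_i],\rho},\\
E_i &= \bigoplus_{\tau \in L_i,\, \tau \subset J \subsetneq [l_i]} E^{J^c} \otimes Y^{J,\tau} \otimes \widetilde{H}^*(\Sigma |lk_{L_i}(\tau)_{J^c}|).
\end{align*}

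The second step is to plug $(B_i,C_i,E_i)$ into Theorem \ref{BBCGSS} for $Z_K(X'_i, A'_i)$. The outer sum over $\sigma \in K$ with $\sigma \subset I \subset [m]$ produces the leading factor $\widetilde{H}^*(\Sigma |lk_K(\sigma)_{I^c}|)$ together with $\bigotimes_{k \in I^c} E_k \otimes \bigotimes_{s \in \sigma} C_s \otimes \bigotimes_{s' \in I \setminus \sigma} B_{s'}$. Each $E_k$ expands into an inner index $(J,\tau)$ with $\tau \in L_k$ and $\tau \subset J \subsetneq [l_k]$, each $C_s$ contributes a non-simplex $\rho'$ of $L_s$, and each $B_{s'}$ contributes a simplex $\rho$ of $L_{s'}$. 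Collecting these indices under the outer sum matches the stated formula exactly.

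The main obstacle I anticipate is verifying that $(X'_i, A'_i)$ genuinely satisfies Definition \ref{free}, which is required before Theorem \ref{BBCGSS} may be invoked at the outer stage. This amounts to showing that the long exact sequence of the pair splits compatibly with the decomposition above, with the coboundary $\delta$ restricting to the suspension isomorphism $E_i \to sE_i = W_i$ on the complementary summands and to zero elsewhere. Naturality of the BBCG spectral sequence for the inclusion $L_i \hookrightarrow \Delta^{l_i-1}$, combined with the freeness of each original pair $(X_{ij}, A_{ij})$ and the summand-level description of $\phi$ from Section \ref{general}, should supply this; the delicate part will be the bookkeeping that assembles the individual coboundaries on the pairs $(X_{ij},A_{ij})$ into a single coboundary on $(X'_i,A'_i)$ of the form prescribed by Definition \ref{free}.
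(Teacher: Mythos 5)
Your proposal follows essentially the same route as the paper's proof: apply Ayzenberg's reformulation, identify $B_i$, $C_i$, $E_i$ as the image, kernel, and cokernel of $H^*(\prod_j X_{ij})\rightarrow H^*(Z_{L_i}(\underline{X},\underline{A}))$ using the collapse of the BBCG decomposition for the full simplex, and then substitute into Theorem \ref{BBCGSS} for the outer product over $K$. The verification of Definition \ref{free} for the new pairs that you flag as the delicate point is likewise left implicit in the paper, which simply asserts (after Theorem \ref{BBCGSS}) that computing the kernel, image, and cokernel suffices to apply the theorem.
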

\begin{proof}
Recall Ayzenberg's Theorem, $$Z_{K(L_1,\ldots,L_m)}(X_{ij},A_{ij})=Z_K(\prod\limits_{j\in [l_i]}X_{ij},Z_{L_i}(X_{ij},A_{ij}))$$  
and that we want to find the kernel, $C_i$, image, $B_i$, and cokernel, $E_i$, of the induced map $H^*(\prod\limits_{j\in [l_i]}X_{ij}) \rightarrow H^*(Z_{L_i}(X_{ij},A_{ij}))$.

\begin{tabular}{ll}Since $H^*(\prod \limits_{j=1}^{l_i}X_{ij})$&$=\bigotimes\limits_{j=1}^{l_i}(B_{ij}\oplus C_{ij})$\\
&$=\bigoplus\limits_{\rho \in \Delta^{l_i-1}}Y^{[l_i],\rho}$\\
&$=\bigoplus\limits_{\rho \in L_i}Y^{[l_i],\rho} \oplus \bigoplus\limits_{\rho'\notin L_i}Y^{[l_i],\rho'}$
\end{tabular}

\noindent and by theorem \ref{BBCGSS}
$$H^*(Z_{L_i}(X_{ij},A_{ij})_{j\in [l_i]})=\bigoplus \limits_{\rho\in L_i}Y^{[l_i],\rho} \oplus \bigoplus\limits_{J\subsetneq [l_i],\tau\subset J}E^{[l_i]-J}\otimes \widetilde{H}^*(\Sigma |lk(\tau)_{J^c}|)\otimes Y^{J,\tau}  $$ 
we have  
\begin{align}
 B_i&=\bigoplus \limits_{\rho\in L_i}Y^{[l_i],\rho}   \label{Bi}  \\
\label{Ci} C_i&=\bigoplus\limits_{\rho'\notin L_i}Y^{[l_i],\rho'}\\
\label{Ei}  E_i&= \bigoplus\limits_{\tau \in L_i,\tau\subset J\subsetneq [l_i]}E^{[l_i]-J}\otimes \widetilde{H}^*(\Sigma | lk(\tau)_{J^c}|)\otimes Y^{J,\tau}
 \end{align}

Notice that $C_i$ is the Stanley-Reisner ideal of $L_i$, $I(L_i)$, and $B_i$ is the Stanley-Reisner ring of $L_i$, $SR(L_i)$ (see Corollary \ref{SRversion}). Now applying Theorem \ref{BBCGSS} again, 
$$H^*(Z_K(\prod\limits_{j\in [l_i]}X_{ij},Z_{L_i}(X_{ij},A_{ij})))= \bigoplus_{I\subset [m], \sigma\subset I} E^{I^c}\otimes Y^{I, \sigma}\otimes \widetilde{H}^*(\Sigma| lk(\sigma)_{I^c}|) $$

Substituting (\ref{Ei}) and expanding, 

\begin{tabular}{cl}$E^{[m]-I}$ &$=\bigotimes \limits_{k\in {[m]-I}} E_k $\\
&$=\bigotimes \limits_{k\in {[m]-I}}\bigoplus\limits_{J, \tau} E^{[l_k]-J}\otimes \widetilde{H}^*(\Sigma | lk(\tau)_{J^c}|)\otimes Y^{J,\tau}$\\
& $=\bigoplus\limits_{J, \tau} \Big(\bigotimes \limits_{k\in {[m]-I}} E^{[l_k]-J}\otimes \widetilde{H}^*(\Sigma | lk(\tau)_{J^c}|)\otimes Y^{J,\tau}$ \Big)\\
\end{tabular}

\

Next, substituting (\ref{Bi}) and (\ref{Ci}) and expanding,

\begin{tabular}{cl}$Y^{I,\sigma}$&$=\bigotimes\limits_{s\in\sigma}C_s\otimes \bigotimes \limits_{s'\in I-\sigma}B_{s'}$\\
&$=\bigotimes\limits_{s\in\sigma}(\bigoplus\limits_{\rho' \notin L_s}Y^{[l_s],\rho'}) \otimes \bigotimes \limits_{s'\in I-\sigma}(\bigoplus\limits_{\rho\in L_{s'}}Y^{[l_{s'}],\rho})$\\
&$=\bigoplus\limits_{\rho' \notin L_s}(\bigotimes\limits_{s\in\sigma}Y^{[l_s],\rho'}) \otimes \bigoplus\limits_{\rho\in L_{s'}}(\bigotimes \limits_{s'\in I-\sigma}Y^{[l_{s'}],\rho})$ \\
&$=\bigoplus\limits_{\rho', \rho} \Big( \bigotimes\limits_{s\in \sigma} Y^{[l_s],\rho'}\otimes \bigotimes\limits_{s'\in I-\sigma}Y^{[l_{s'}],\rho} \Big)$
\end{tabular}

\

The proposition follows.


\end{proof}
Recall that the decomposition of $H^*(Z_K(\underline{X},\underline{A}))$ in Theorem \ref{bbcgss} differs from the decomposition for $H^*(\widehat{Z}_K(\underline{X},\underline{A}))$ by the presence of $1\in B_{ij}$. As a consequence, the same proof provides a decomposition for $H^*(\widehat{Z}_{K(L_1,\ldots,L_m)}(\underline{X},\underline{A}))$. Below is an example of how Proposition \ref{PPoverCSC} can be used to compute the Poincar\'{e} series for the cohomology of a polyhedral product over a composed simplicial complex.
\begin{ex}
 Suppose we have that $K$ is two ghost vertices, $L_1$ is a point and $L_2$ is two points. Namely, $K=\{ \emptyset \}$ is indexed by $[2]$, $L_1=\{\{11\}\}$, $L_2=\{\{21\},\{22\}\}$. Given a pair of spaces such that $\widetilde{H}^*(X)=\langle b_4,c_6 \rangle$ and $\widetilde{H}^*(A)=\langle e_2,b_4 \rangle$, we will compute the Poincar\'{e} series for $H^*(\widehat{Z}_{K(L_1,L_2)}(X,A))$.
\begin{enumerate}
\item For $I=\emptyset$, the only possible simplex, $\sigma$, is the empty set. Since $J_1\neq [11]$, the only choice for $J_1$ and $\tau_1$ is the empty set, then $lk(\tau_1)_{J_1^c}=L_1$ is contractible. There is no contribution to the Poincare series. This means we will only consider $I$ such that $1\in I$. 
 \item In the case $I=\{1\}$ and  $\sigma = \emptyset$, we consider subsets, $J_k$, of $L_k$ for $k\in [m]\backslash I$, and simplices $\rho'_s \in L_s$ and $\rho_{s'} \in L_{s'}$ for $s\in \sigma$ and $s' \in I \backslash \sigma$.  
	\begin{enumerate}
 	\item For $J_2=\emptyset$ and  $\tau_2 = \emptyset$,  $lk(\tau_2)_{J_2^c}=L_2$ and  $\widetilde{H}^*(\Sigma |lk(\tau_2)_{J_2^c}|)=<\iota_1>$
		\begin{enumerate}
		\item If $\rho_1=\emptyset$, then  $\overline{P}(E_{21} \otimes E_{22} \otimes \iota_1 \otimes B_{11}) =t^9$ is contributed to the Poincare series
		\item If $\rho_1=\{11\}$, then  $\overline{P}(E_{21} \otimes E_{22} \otimes \iota_1 \otimes C_{11}) =t^{11}$ is contributed
		\end{enumerate}
	\item Similarly, with $J_2=\{21\}$ and  $\tau_2= \emptyset$, $lk(\tau_2)_{J_2^c}=\{\{22\} \}$ is contractible
	\item If $J_2=\{21\}$ and  $\tau_2= \{21\}$, then  $lk(\tau_2)_{J_2^c}=\emptyset$
		\begin{enumerate}
 		\item If $\rho_1=\emptyset$, then  $\overline{P}(C_{21} \otimes E_{22} \otimes 1 \otimes B_{11}) =t^{12}$
		\item If $\rho_1=\{11\}$, then  $\overline{P}(C_{21} \otimes E_{22} \otimes 1 \otimes C_{11}) =t^{14}$
		\end{enumerate}
	\item If $J_2=\{22\}, \tau_2= \{22\}$ then  $lk(\tau_2)_{J_2^c}=\emptyset$
		\begin{enumerate}
		\item If $\rho_1=\emptyset$, then  $\overline{P}(C_{22} \otimes E_{21} \otimes 1 \otimes B_{11}) =t^{12}$
		\item If $\rho_1=\{11\}$, then  $\overline{P}(C_{22} \otimes E_{21} \otimes 1 \otimes C_{11}) =t^{14}$
		\end{enumerate}
	\end{enumerate}
 \item For $I=\{2,1\}$ and  $\sigma = \emptyset$, $lk(\sigma)_I=\emptyset$
		\begin{enumerate}
		\item For $\rho_1 = \emptyset$,
			\begin{enumerate}
			\item If $\rho_2 =\emptyset $, then $\overline{P}(B_{11} \otimes B_{21} \otimes B_{22})=t^{12}$
			\item If $\rho_2 = \{21\}$, then $\overline{P}(B_{11} \otimes C_{21} \otimes B_{22}) = t^{14}$
			\item If $\rho_2 = \{22\}$, then $\overline{P}(B_{11} \otimes B_{21} \otimes C_{22})= t^{14}$
			\end{enumerate}
		\item For $\rho_1 = \{11\}$
			\begin{enumerate}
			\item If $\rho_2 =\emptyset $, then $\overline{P}(C_{11} \otimes B_{21} \otimes B_{22})=t^{14}$
			\item If $\rho_2 = \{21\}$, then $\overline{P}(C_{11} \otimes C_{21} \otimes B_{22}) = t^{16}$
			\item If $\rho_2 = \{22\}$, then $\overline{P}(C_{11} \otimes B_{21} \otimes C_{22})= t^{16}$
			\end{enumerate}
		\end{enumerate}
\end{enumerate}
In conclusion, $$\overline{P}(H^*(\widehat{Z}_{K(L_1,L_2)}(X,A)))=t^9 +t^{11} +3t^{12} + 5t^{14} +2t^{16}$$
Since 
$$\begin{array}{cl} K(L_1,L_2) & = L_1 \ast L_2 \\
&=\{\{11,21\}, \{11,22\}\} \end{array}$$ a simplicial complex with three vertices and two edges, we can see that this is consistent with Example 5.8 in \cite{BBCG4}. Their example computes the Poincare series for the polyhedral product over a simplicial complex with three vertices and two edges, and spaces with equivalent cohomology. We obtained the same answer using a different method.
\end{ex}
 \begin{defn}\label{srdef}
Let $K$ be a simplicial complex on $m$ vertices and $k$ be a ring. Consider $k[m]:=k[v_1, \ldots, v_m]$, the ring of polynomials in $m$ indeterminants. The \emph{generalized Stanley-Reisner ideal} of $K$, $I(K)$, is generated by square-free monomials indexed by the non-simplices of $K$ $$I(K)=<v_{i_1}\ldots v_{i_n} \:|\: \{i_1,\ldots,i_n\} \notin K >$$ The Stanley-Reisner (or face) ring of a simplicial complex, $K$, is denoted $k[K]$ and is defined as $$\displaystyle k[K] = \faktor{k[m]}{I(K)}$$	  
\end{defn}

The following is another version of Theorem \ref{PPoverCSC} which highlights the role of the Stanley-Reisner ring (see Definition \ref{srdef}).
\begin{cor}\label{SRversion}
Following Definition \ref{free}, suppose we have a decomposition
$$
\begin{array}{ccccc}
H^*(A_{ij})&=&B_{ij}\oplus E_{ij}\\
H^*(X_{ij})&=&B_{ij}\oplus C_{ij}
 \end{array}$$ with $E_{ij}$ the cokernel of $H^*(X_{ij})\rightarrow H^*(A_{ij})$, $B_{ij}$ the image, and $C_{ij}$ the kernel. Then 
  
\noindent
$H^*(Z_{K(L_1,\ldots,L_m)}(X,A))=$
 $$\bigoplus\limits_{I,\sigma, J, \tau}\widetilde{H}^*(\Sigma | lk(\sigma)_{I^c}|)\otimes \Big(\bigotimes\limits_{k\in [m] \backslash I}E^{[l_k]-J}\otimes \widetilde{H}^*(\Sigma | lk(\tau)_{J^c}|)\otimes Y^{J,\tau} \Big)\otimes \Big(\bigotimes\limits_{s \in \sigma}I(L_s)\otimes \bigotimes\limits_{s' \in I \backslash \sigma}SR(L_{s'}) \Big)$$

where 
\begin{enumerate}
\item $I\subset [m]$ and $\sigma $ is a simplex in $ K$, with  $\sigma\subset I$
\item   $J\subsetneq [l_k]$.   $\tau$ is a simplex in $ L_k$,  with $\tau \subset J$
 \item  $\rho'$ is a nonsimplex in $L_s$  and $\rho$ is a simplex in $L_{s '}$..
\end{enumerate}
\end{cor}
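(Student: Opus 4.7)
The plan is to derive this corollary as a direct reformulation of Theorem \ref{PPoverCSC}: once the innermost sums over $\rho$ and $\rho'$ are aggregated, the two inner tensor factors become exactly the Stanley-Reisner ideal and ring of the $L_s$'s. So the ``proof'' is essentially a bookkeeping step applied to the decomposition already in hand.

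First I would recall from the proof of Theorem \ref{PPoverCSC} the expressions
$$B_i=\bigoplus_{\rho\in L_i}Y^{[l_i],\rho},\qquad C_i=\bigoplus_{\rho'\notin L_i}Y^{[l_i],\rho'},$$
together with the definition $Y^{[l_i],\rho}=\bigotimes_{i\in\rho}C_i\otimes\bigotimes_{i\in[l_i]\setminus\rho}B_i$. Each summand $Y^{[l_i],\rho}$ is naturally indexed by the square-free monomial whose support is $\rho$, with the $C_i$-slots playing the role of the variables and the $B_i$-slots the role of ``$1$''. Summing over faces $\rho\in L_i$ therefore reproduces the basis of $k[L_i]=SR(L_i)$ described in Definition \ref{srdef}, while summing over non-faces $\rho'\notin L_i$ reproduces the monomial generators of $I(L_i)$.

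Next I would substitute these identifications into the formula of Theorem \ref{PPoverCSC}. The outer direct-sum indices $\rho'$ and $\rho$ get absorbed into the definitions of $I(L_s)$ and $SR(L_{s'})$, so the final parenthetical $\bigoplus_{\rho',\rho}\bigl(\bigotimes_{s\in\sigma}Y^{[l_s],\rho'}\otimes\bigotimes_{s'\in I\setminus\sigma}Y^{[l_{s'}],\rho}\bigr)$ collapses to $\bigotimes_{s\in\sigma}I(L_s)\otimes\bigotimes_{s'\in I\setminus\sigma}SR(L_{s'})$. Plugging this back into the rest of the decomposition of Theorem \ref{PPoverCSC} yields the claimed formula verbatim.

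The only step needing real care is the identification $B_i\cong SR(L_i)$ and $C_i\cong I(L_i)$ as graded vector spaces, which is a translation between the notation introduced after Definition \ref{free} and that of Definition \ref{srdef}. Since the author has already flagged this correspondence inside the proof of Theorem \ref{PPoverCSC}, no genuine obstacle appears; the content of the corollary is the recognition that two pieces of notation describe the same object.
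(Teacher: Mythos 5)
Your proposal is correct and follows the same route as the paper: the author proves the corollary implicitly by noting inside the proof of Theorem \ref{PPoverCSC} that $B_i=\bigoplus_{\rho\in L_i}Y^{[l_i],\rho}$ is $SR(L_i)$ and $C_i=\bigoplus_{\rho'\notin L_i}Y^{[l_i],\rho'}$ is $I(L_i)$, so the corollary is exactly the bookkeeping substitution you describe.
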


\begin{cor}
Let $I'=I'_1 \sqcup \ldots \sqcup I'_m \subset [\Sigma{l_i}]$ and $\sigma'=\sigma'_1 \sqcup \ldots \sqcup \sigma'_m$ a simplex in $K(L_1,\ldots,L_m)$ with $\sigma'\subset [\sum{l_i}] \backslash I'$. Then 
$$\widetilde{H}^*(\Sigma |lk(\sigma')_{I'}|)=\bigotimes \limits_{k\in I}\widetilde{H}^*(\Sigma | lk(\sigma'_k)_{I'_k }|)\otimes \widetilde{H}^*(\Sigma | lk(\sigma)_{I}|)$$
where $I=\{i\in [m]|\: I'_i \neq \emptyset\}$ and $\sigma=\{i\in [m] \backslash I|\sigma'_i\notin L_i\}$.
\end{cor}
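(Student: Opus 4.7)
The plan is to identify the restricted link $lk(\sigma')_{I'}$ as itself a composition of simpler simplicial complexes, and then establish that the suspension cohomology of any composition decomposes as a tensor product. Combining these two facts yields the formula.

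First, I will verify combinatorially that
$$lk_{K(L_1,\ldots,L_m)}(\sigma')_{I'} \;=\; \bigl(lk_K(\sigma)_I\bigr)\bigl(\{lk_{L_i}(\sigma'_i)_{I'_i}\}_{i\in I}\bigr)$$
as simplicial complexes. A face $\tau' = \bigsqcup \tau'_i$ with $\tau' \subset I'$ lies in the left-hand link iff $\sigma' \cup \tau' \in K(L_1,\ldots,L_m)$, i.e.\ $\{i : \sigma'_i \cup \tau'_i \notin L_i\} \in K$. For $i \notin I$ the restriction forces $\tau'_i = \emptyset$, so those coordinates contribute precisely the fixed set $\sigma$; for $i \in I$, the condition $\sigma'_i \cup \tau'_i \in L_i$ is equivalent to $\tau'_i \in lk_{L_i}(\sigma'_i)_{I'_i}$. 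The membership condition thus becomes $\{i \in I : \tau'_i \notin lk_{L_i}(\sigma'_i)_{I'_i}\} \in lk_K(\sigma)_I$, which is exactly the defining condition for the composition on the right.

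Second, I will show the key identity that for any simplicial complex $K'$ on $[n]$ and any simplicial complexes $M_1,\ldots,M_n$,
$$\widetilde{H}^*(\Sigma|K'(M_1,\ldots,M_n)|) \;\cong\; \widetilde{H}^*(\Sigma|K'|) \otimes \bigotimes_{i=1}^{n} \widetilde{H}^*(\Sigma|M_i|).$$
My approach is to compute $H^*(Z_{K'(M_1,\ldots,M_n)}(D^2, S^1))$ in two different ways and extract the identity by matching simplicial factors. For the pair $(D^2, S^1)$ one has $B = \mathbb{Z}$, $C = 0$, $E = \mathbb{Z}$, so only the outer simplex $\sigma = \emptyset$ contributes in the BBCG sums. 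Theorem~\ref{BBCGSS} applied directly to the composed complex gives
$$H^*(Z_{K'(M_1,\ldots,M_n)}(D^2,S^1)) \;=\; \bigoplus_{J' \subset [\sum_i m_i]} E^{J'^c} \otimes \widetilde{H}^*\bigl(\Sigma|(K'(M_1,\ldots,M_n))_{J'^c}|\bigr),$$
while Theorem~\ref{PPoverCSC} applied to the same polyhedral product gives
$$\bigoplus_{I,\,(J_k)_{k\notin I}} \widetilde{H}^*(\Sigma|K'_{I^c}|) \otimes \bigotimes_{k \notin I} E^{J_k^c} \otimes \widetilde{H}^*(\Sigma|(M_k)_{J_k^c}|).$$
Matching the two direct sums by their $E$-multidegree yields a bijection between indexing sets with $J'^c = \bigsqcup_{k \notin I} J_k^c$, and the combinatorial identification of the first step (applied to the composition with all $\sigma'_i = \emptyset$) shows $(K'(M_1,\ldots,M_n))_{J'^c} = (K'_{I^c})(\{(M_k)_{J_k^c}\}_{k\notin I})$. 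Equating the matched simplicial factors, and allowing the restrictions $K'_{I^c}$ and $(M_k)_{J_k^c}$ to range freely over all simplicial complexes, establishes the key identity.

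Finally, applying the key identity to the composition from the first step yields the formula of the corollary. The main obstacle will be the second step: one must carefully align the indexing conventions of the two BBCG-style decompositions and handle the degenerate cases (ghost vertices in $K$ or in the $L_i$, or situations where some $K'_{I^c}$ or $(M_k)_{J_k^c}$ is a full simplex) by invoking the convention $\widetilde{H}^*(\Sigma\emptyset) = 1$ uniformly on both sides, so that the term-by-term matching is a genuine equality rather than a matching up to an unresolved degree shift.
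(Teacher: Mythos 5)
Your argument is correct in outline, but it is structured quite differently from the paper's proof, and the comparison is instructive. The paper works entirely at the level of the two cohomology decompositions of $H^*(Z_{K(L_1,\ldots,L_m)}(\underline{X},\underline{A}))$ for a \emph{general} pair: it applies Theorem \ref{bbcgss} to the composed complex on one hand and Theorem \ref{PPoverCSC} on the other, then matches summands by tracking which $E$'s and which $Y^{I',\sigma'}$ appear, reading off the identification of the link factors directly (the bookkeeping $I'_i=[l_i]$ for $i\in I$, $I'_k=J_k$ for $k\notin I$, and $\sigma'_s=\rho'\notin L_s$ exactly when $s\in\sigma$). You instead factor the statement into two clean lemmas: (i) the purely combinatorial identity $lk_{K(L_1,\ldots,L_m)}(\sigma')_{I'}=\bigl(lk_K(\sigma)_I\bigr)\bigl(\{lk_{L_i}(\sigma'_i)_{I'_i}\}_{i\in I}\bigr)$, which the paper never states but which is a genuine strengthening (it extends Ayzenberg's Lemma~7.5 on full subcomplexes of compositions to links, and your verification of it is correct); and (ii) the tensor splitting of $\widetilde{H}^*(\Sigma|\,\cdot\,|)$ for compositions. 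Your lemma (i) makes the result more transparent and reusable. Two remarks on lemma (ii): it is exactly the cohomological shadow of Ayzenberg's Corollary~6.2, $K(M_1,\ldots,M_n)\simeq K\ast M_1\ast\cdots\ast M_n$, which the paper already invokes in the alternate proof of Corollary \ref{cor:pscaa}, so you could cite that and skip the $(D^2,S^1)$ double-count entirely; and if you do keep the double-count, note that you do not need the restrictions to ``range freely'' --- taking $J'$ to be the full vertex set already yields the identity for $K'$ and the $M_k$ themselves, and the matching of summands is legitimate because both decompositions refine the same $\Z^{\sum l_i}$-multigrading (the same implicit step the paper relies on). The only genuine corner case to flag, which neither you nor the paper fully resolves, is $i\in I$ with $\sigma'_i\notin L_i$, where $lk_{L_i}(\sigma'_i)$ is void and the conventions for $\widetilde{H}^*(\Sigma\emptyset)$ must be applied consistently on both sides.
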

\begin{proof}
By theorem \ref{splitting}, $H^*(Z_{K(L_1,\ldots,L_m)}(X,A))=\bigoplus\limits_{\sigma',I'} E^{[\sum l_i] - I'}\otimes \widetilde{H}^*(\Sigma | lk(\sigma')_{I'^c}|)\otimes Y^{I',\sigma'}$. Moreover, by proposition \ref{PPoverCSC}, we have that $E^{[\sum{l_i}]-I'}= \bigotimes\limits_{k\in [m]-I}E^{[l_k]-J_k}$ whenever 
$I'_i=[l_i]$ for all $i\in I$ and $I'_k=J_k$ for all $k\in [m]-I$. We also have that $Y^{I',\sigma'}=\bigotimes\limits_{k\in [m]-I}Y^{J_k,\tau}\otimes (\bigotimes\limits_{s \in \sigma}Y^{[l_s],\rho'})\otimes (\bigotimes\limits_{s'\in I-\sigma} Y^{[l_{s'}],\rho}) $ whenever $\sigma'=(\cup \tau)\cup (\cup \rho') \cup (\cup \rho)$. Thus for $k \in [m]-I$, $\sigma_k'=\tau$. Similarly, for $s\in \sigma$, $\sigma_s'=\rho'\notin L_s$. In other words $s\in \sigma$ if and only if $\sigma_s' \notin L_s$. Lastly, for $s'\in I-\sigma$, $\sigma'_{s'}=\rho$. A change of notation is used so that the proposition is not stated in terms of complements of sets.
\end{proof}

\begin{ex}
 Refer to Example \ref{comp} of $K(L_1,L_2,L_3)$ for details of the construction. We will find   $\widetilde{H}^*(\Sigma |lk_{K(L_1,L_2,L_3)}(\sigma')_{I'}|)$ in terms of the links in $K, L_1, L_2, L_3$ for several cases of $\sigma' \in K(L_1,L_2,L_3)$ and  $I' \subset [\sum l_i]$.
\begin{enumerate}
    \item Suppose $\sigma' = \{32\}$. Then $\sigma'_1=\emptyset, \sigma'_2 = \emptyset, \sigma'_3=\{32\}$
    \begin{enumerate}
    \item If $I'=\{11,31\}$, then $I'_1=\{11\}, I'_2=\emptyset, I'_3=\{31\}$. This means $I=\{1,3\}$ and $\sigma = \emptyset$. Thus 
    
    $\begin{array}{cl}\widetilde{H}^*(\Sigma |lk(\sigma')_{I'}|)&=\widetilde{H}^*(\Sigma |lk(\sigma'_1)_{I'_1}|) \otimes  \widetilde{H}^*(\Sigma |lk(\sigma'_3)_{I'_3}|) \otimes \widetilde{H}^*(\Sigma| lk (\sigma)_I| ) \\
    &=\widetilde{H}^*(\Sigma \emptyset) \otimes  \widetilde{H}^*(\Sigma \emptyset) \otimes \widetilde{H}^*(\Sigma| \{\{1\},\{3\} \}| )  \\
    &= 1 \otimes 1 \otimes 1 \otimes \widetilde{H}^*(S^1)
    \end{array}$
    
    This is consistent with $|lk(\sigma')_{I'}|=|\{\{11,21\},\{21,31\}\}_{I'}|=|\{\{11\},\{31\}\}| \simeq S^0$
     \item If $I'=\{31,21\}$, then $I=\{2,3\}$ and $\sigma=\emptyset$. Thus $\widetilde{H}^*(\Sigma |lk(\sigma')_{I'}|)=1 \otimes 1 \otimes 0 $. This is consistent with $|lk(\sigma')_{I'}|=|\{\{31,21\} \} |=\Delta^1$, which is contactible
    \end{enumerate}
    \item Suppose $\sigma'=\{11,32\}$ and $I'=\{31\}$. Then $I=\{3\}$ and $\sigma=\{1\}$. Moreover, $\widetilde{H}^*(\Sigma |lk(\sigma')_{I'}|)=1\otimes 1$, which is consistent with $lk(\sigma')_{I'}=\emptyset$.
\end{enumerate}
\end{ex}

  Recall that Definition \ref{full} of the full subcomplex, $K_I$ or $K|_I$, and that the notation $\mathcal{K}$ denotes the geometric realization of $K$. Specializing Proposition \ref{PPoverCSC} to the case $\xa = (\underline{CA}, \underline{A}) $ we have the following corollary. 
\begin{cor}\label{cor:caa}
$$H^*(Z_{K(L_1,\ldots,L_m)}(\underline{CA}, \underline{A}))$$ $$=\bigoplus\limits_{I\subset [m], J_k\subset [l_k], J_k\neq \emptyset} \widetilde{H}^*(\Sigma \mathcal{K}_I)\otimes \big(\bigotimes\limits_{k\in I} \widetilde{H}^*(\Sigma \mathcal{L}_k|_{J_k})\otimes \widetilde{H}^*(\widehat{A}^{ J_k})\big)$$ where $\widehat{A}^{ J}:=\underset{j\in [J]}{\wedge} A_j$
\end{cor}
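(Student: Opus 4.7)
The plan is to specialize Theorem \ref{PPoverCSC} to the pair $(CA_{ij}, A_{ij})$ and collapse the sum by exploiting the triviality of the cone. First I would compute the decomposition of Definition \ref{free} in this case: since $CA_{ij}$ is contractible, the long exact sequence of the pair forces the natural map $H^*(CA_{ij}) \to H^*(A_{ij})$ to be the inclusion of the unit in degree zero, which gives
\[
B_{ij} = \langle 1 \rangle, \qquad C_{ij} = 0, \qquad E_{ij} = \widetilde{H}^*(A_{ij}).
\]

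Next I would substitute these decompositions into the formula of Theorem \ref{PPoverCSC} and argue that $C_{ij} = 0$ kills most of the summands. Inspecting the three ``$Y$'' factors in the formula: the factor $Y^{[l_s], \rho'}$ contains a tensor copy of $C_{sj}$ for every $j \in \rho'$, so it vanishes whenever $\rho' \neq \emptyset$; but $\rho'$ is required to be a \emph{nonsimplex} of $L_s$, hence nonempty. The only way the product $\bigotimes_{s \in \sigma} Y^{[l_s],\rho'}$ can survive is therefore $\sigma = \emptyset$. For the same reason, $Y^{[l_{s'}],\rho}$ survives only for $\rho = \emptyset$, and $Y^{J,\tau}$ survives only for $\tau = \emptyset$; in both cases the surviving factor reduces to $1$, being a tensor of copies of $B = \langle 1 \rangle$.

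After these vanishings, the remaining indexing data are $I \subset [m]$ together with, for each $k \in [m] \setminus I$, a proper subset $J_k \subsetneq [l_k]$. The link $lk_K(\emptyset)_{I^c}$ becomes the full subcomplex $\mathcal{K}_{I^c}$, and each link $lk_{L_k}(\emptyset)_{J_k^c}$ becomes $\mathcal{L}_k|_{J_k^c}$. The surviving $E$-factor simplifies using the K\"unneth theorem for the reduced cohomology of a smash product:
\[
E^{J_k^c} \;=\; \bigotimes_{j \in J_k^c} \widetilde{H}^*(A_{kj}) \;=\; \widetilde{H}^*(\widehat{A}^{\,J_k^c}).
\]
Performing the change of variables $I \mapsto [m] \setminus I$ (so the new $I$ indexes the coordinates where a link factor appears) and $J_k \mapsto [l_k] \setminus J_k$ (which then ranges over the \emph{nonempty} subsets of $[l_k]$, since the old $J_k$ was a proper subset) produces exactly the formula in the statement.

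No step is genuinely hard: the argument is a bookkeeping exercise driven by the single observation $C_{ij} = 0$. The only place where one must be careful is the conventional flip between $I$ and $I^c$ (and $J_k$ and $J_k^c$) when translating the ``simplex-indexed'' sum of Theorem \ref{PPoverCSC} into the cleaner ``nonempty-restriction'' sum of the corollary, and verifying that the K\"unneth identification for $E^{J_k^c}$ is applied with the reduced cohomology of the smash, not the unreduced cohomology of the product.
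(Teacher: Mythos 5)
Your proposal is correct and follows essentially the same route as the paper's proof: specialize Theorem \ref{PPoverCSC} via $B_{ij}=\langle 1\rangle$, $C_{ij}=0$, $E_{ij}=\widetilde{H}^*(A_{ij})$, observe that $C_{ij}=0$ forces $\sigma=\tau=\rho=\emptyset$ (and kills any nonempty nonsimplex $\rho'$), so the links collapse to full subcomplexes and the $E$-factors assemble into $\widetilde{H}^*(\widehat{A}^{\,J_k^c})$. Your explicit relabeling $I\mapsto [m]\setminus I$, $J_k\mapsto [l_k]\setminus J_k$ is only implicit in the paper (which just notes $J_k\neq[l_k]$ so $J_k^c\neq\emptyset$), but it is the same bookkeeping.
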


\begin{proof}
Recall $L_k$ is a simplicial complex on the set $[l_k]$. For the pair $(CA_i,A_i)$, $B_i=1$, $C_i=0$ and $E_i=\widetilde{H}^*(A_i)$. We have that $Y^{J, \tau}\neq 0$ whenever $\tau =\emptyset$, so that $lk(\tau_k)_{J_k}=L_k|_{J_k}$. Also, if $\sigma \neq \emptyset$, then $Y^{[l_i],\rho'}=0$ because the emptyset is not a nonsimplex of any simplicial complex. Since $\sigma =\emptyset$, it follows that $lk(\sigma)_{I^c}=K_{I^c}$. Recall that $J_k\neq [l_k]$ and hence $J^c_k \neq \emptyset$. 
\end{proof}

An immediate corollary of Corollary \ref{cor:caa} is a computation of the Poincar\'{e} series of  $$\widetilde{H}^*(Z_{K(L_1,\ldots,L_m)}(\underline{CA},\underline{A})).$$

\begin{cor}\label{cor:pscaa}
  $$\bar{P}(\widetilde{H}^*(Z_{K(L_1,\ldots,L_m)}(\underline{CA},\underline{A})))$$ $$=\sum\limits_{I\subset [m], J_k \subset [l_k], J_k\neq \emptyset}\bar{P}(\widetilde{H}^*(\Sigma \mathcal{K}_I)) \prod\limits_{i \in I}\bar{P}(\widetilde{H}^*(\Sigma \mathcal{L}_k|_{J_k}))\bar{P}(\widetilde{H}^*(\widehat{A}^{ J_k}))$$
\end{cor}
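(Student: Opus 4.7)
The plan is to derive this Poincar\'{e} series formula as a direct consequence of the cohomology decomposition already established in Corollary \ref{cor:caa}. Since every ingredient on the right-hand side of that decomposition is a reduced cohomology group and the summands are coupled only by tensor products and direct sums, the Poincar\'{e} series machinery applies in a completely mechanical way.

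First, I would recall the two standard properties of Poincar\'{e} series of graded vector spaces: additivity on direct sums, $\bar{P}(V \oplus W) = \bar{P}(V) + \bar{P}(W)$, and multiplicativity on tensor products, $\bar{P}(V \otimes W) = \bar{P}(V)\,\bar{P}(W)$. Since $\widetilde{H}^*$ produces graded vector spaces over the coefficient ring (assumed to be a field for the Poincar\'{e} series to make sense and for a K\"{u}nneth-type identification), these identities apply to every piece appearing in the decomposition of Corollary \ref{cor:caa}.

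Next, I would apply $\bar{P}$ termwise to the decomposition
\[
H^*(Z_{K(L_1,\ldots,L_m)}(\underline{CA}, \underline{A})) = \bigoplus\limits_{I,\,J_k} \widetilde{H}^*(\Sigma \mathcal{K}_I)\otimes \bigotimes\limits_{k\in I} \bigl(\widetilde{H}^*(\Sigma \mathcal{L}_k|_{J_k})\otimes \widetilde{H}^*(\widehat{A}^{J_k})\bigr).
\]
Additivity converts the outer direct sum (indexed by $I\subset [m]$ and $J_k\subset [l_k]$ with $J_k\neq \emptyset$) into the sum in the claimed formula, while multiplicativity turns the outer tensor product into $\bar{P}(\widetilde{H}^*(\Sigma \mathcal{K}_I))\,\prod_{k\in I}\bar{P}(\widetilde{H}^*(\Sigma \mathcal{L}_k|_{J_k}))\,\bar{P}(\widetilde{H}^*(\widehat{A}^{J_k}))$, which is exactly the right-hand side of the corollary.

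There is essentially no obstacle, since all the conceptual work was carried out in Theorem \ref{PPoverCSC} and specialized in Corollary \ref{cor:caa}; the only mild check is bookkeeping on the index set to make sure the constraints $I\subset [m]$, $J_k\subset [l_k]$, and $J_k\neq \emptyset$ match in both statements, and that we are consistently using reduced Poincar\'{e} series on both sides (signaled by the notation $\bar{P}$), which we are because the decomposition is stated entirely in terms of reduced cohomology.
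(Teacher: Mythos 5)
Your proposal is correct and matches the paper's own derivation: the paper presents Corollary \ref{cor:pscaa} as an immediate consequence of Corollary \ref{cor:caa}, obtained exactly as you describe by applying additivity of $\bar{P}$ over direct sums and multiplicativity over tensor products. (The paper also records an alternate proof via Ayzenberg's results on the homotopy type of the composition, but that is explicitly a second route, not the primary one.)
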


\begin{remark}
The Poincar\'{e} series 
$$\bar{P}(\widetilde{H}^*(Z_{K(L_1,\ldots,L_m)}(\underline{CA},\underline{A})))$$ $$=\sum\limits_{B\subset [m]}\bar{P}(\widetilde{H}^*(\Sigma \mathcal{K}_B)) \prod\limits_{b \in B}\bar{P}(\widetilde{H}^*(Z_{L_b}(\underline{CA},\underline{A})))$$
  This generalizes the computation for $(D^2,S^1)$ in Ayzenberg to the case $(\underline{CA},\underline{A})$. The above formulas can be proven instead by also using Ayzenberg's result on the homotopy type of the composition.

\begin{proof}[Alternate proof of Corollary \ref{cor:pscaa}]

Ayzenberg's Lemma 7.5 states that for $J=\cup J_i \subset[\sum{l_i}]$, $K(L_1, \ldots, L_m)_{J}=K_B(L_{b_1}|_{J_{b_1}},\ldots,L_{b_k}|_{J_{b_k}})$ where $B=\{b_i|J_i\neq \emptyset\}$ and Corollary 6.2 states that $K(L_1,\ldots,L_m)\simeq K\ast L_1 \ast \ldots \ast L_m$, and hence $K(L_1,\ldots,L_m) \simeq \Sigma K\wedge L_1 \wedge \ldots \wedge L_m  $ . Now, using the splitting theorem, the wedge lemma, which states that $\widehat{Z}_K(\underline{X},\underline{A}) \simeq \Sigma |K| \wedge \widehat{A}^{ [m]}$, we have the following

\noindent $\widetilde{H}^*(Z_{K(L_1,\ldots,L_m)}(\underline{CA},\underline{A})))=\bigoplus\limits_{J}\widetilde{H}^*(\Sigma K(L_1, \ldots, L_m)_{J}\wedge \widehat{A}^{J})$

$=\bigoplus\limits_{B,J_{b_1},\ldots,J_{b_k}}\widetilde{H}^*( \Sigma K_B(L_{b_1}|_{J_{b_1}},\ldots,L_{b_k}|_{J_{b_k}})\wedge \widehat{A}^{J_{b_1}}\wedge \ldots \wedge \widehat{A}^{J_{b_k}})$\

$=\bigoplus\limits_{B,J_{b_1},\ldots,J_{b_k}}\widetilde{H}^*(\Sigma K_B \wedge \Sigma L_{b_1}|_{J_{b_1}} \wedge \ldots \wedge \Sigma L_{b_k}|_{J_{b_k}}\wedge \widehat{A}^{J_{b_1}}\wedge \ldots \wedge \widehat{A}^{J_{b_k}})$\

$=\bigoplus\limits_{B,J_{b_1},\ldots,J_{b_k}}\widetilde{H}^*(\Sigma K_B)\otimes \bigotimes\limits_{b_i\in B}\widetilde{H}^*(\Sigma  L_{b_i}|_{J_{b_i}} \wedge \widehat{A}^{J_{b_i}})$\

$=\bigoplus\limits_B\widetilde{H}^*(\Sigma K_B)\otimes\bigotimes\limits_{b_i\in B}\bigoplus\limits_{J_{b_i}}\widetilde{H}^*(\Sigma L_{b_i}|_{J_{b_i}} \wedge \widehat{A}^{J_{b_i}})$ \

$=\bigoplus\limits_B\widetilde{H}^*(\Sigma K_B)\otimes\bigotimes\limits_{b_i\in B}\widetilde{H}^*(Z_{L_{b_i}}(\underline{CA},\underline{A}))$

\end{proof}
\end{remark}
\subsection{Multigraded Betti Numbers}\label{mbetti}

 Let $\mathbb{k}$ be the ground field and $\mathbb{k}[m]=\mathbb{k}[v_1, \ldots, v_m]$ be the ring of polynomials in $m$ indeterminates. The ring $\mathbb{k}[m]$ has a $\Z^m$-grading defined by $\deg(v_i)=(0,\ldots, 2, \ldots, 0)$ with $2$ in the $i$-th place. 
Given a free resolution $\ldots \rightarrow R^{-i} \rightarrow \ldots \rightarrow \mathbb{k}[K]$ by $\Z^m$-graded $\mathbb{k}[m]$ modules, we have the Tor-module 
$$ \text{Tor}_{\mathbb{k}[m]}(\mathbb{k}[K],\mathbb{k})= \bigoplus_{i \in \Z_{\geq 0}, \overline{j} \in \Z^m} \text{Tor}^{-i,2\overline{j}}_{\mathbb{k}[m]}(\mathbb{k}[K], \mathbb{k})  $$

The multigraded betti numbers of a simplicial complex are then defined in terms of the muligraded structure of the Tor-module. The $(-i,2\overline{j})$-th betti number of $K$ is the dimension of $\text{Tor}^{-i,2\overline{j}}_{\mathbb{k}[m]}(\mathbb{k}[K], \mathbb{k})$ over $\mathbb{k}$: 
$$\beta_\mathbb{k}^{-i,2\overline{j}}(K):= \dim_\mathbb{k}(\text{Tor}^{-i,2\overline{j}}_{\mathbb{k}[m]}(\mathbb{k}[K],\mathbb{k})).$$
Since the  cohomology ring of the moment-angle complex is  $\text{Tor}_{\mathbb{k}[m]}(\mathbb{k}[K],\mathbb{k})$, the multigraded betti numbers of a simplicial complex have a topological interpretation in terms of $Z_K(D^2,S^1)$ \cite{MR1897064}. We adapt this interpretation for an arbitrary polyhedral product of pairs $(\underline{CA},\underline{A})$.

Morever, taking into account Hochster's theorem:
$$ \beta_\mathbb{k}^{-i,2\overline{j}}(K) = \dim \tilde{H}^{|J|-i-1}(K_J;\mathbb{k}) $$ 
where there is a straightforward association between $\overline{j}\in \Z^m$ and a subset $J\subset [m]$.
Then using the splitting theorem (\ref{splitting}) and wedge lemma we give an analogous definition for the multigraded Betti numbers of a polyhedral product space. 
\begin{defn}\label{betti}For $i\in \Z$ and $J\subset [m]$, the \emph{multigraded Betti numbers} of $Z_K(\underline{CA},\underline{A})$,
denoted $\beta^{i,J}(Z_K(\underline{CA},\underline{A}))$,
are defined as  $$\beta^{i,J}(Z_{K}(\underline{CA},\underline{A})):= \dim\widetilde{H}^{i}(\Sigma \mathcal{K}_J \wedge \widehat{A}^J)$$
Let $s$ and $t_1, \ldots, t_m$ be indeterminates such that $\bar{t}^J=t_1^{j_1} \ldots t_m^{j_m}$ where $j_i=1$ if $i \in J$ and $j_i=0$ otherwise. The \emph{beta-polynomial} of $Z_K(\underline{CA},\underline{A})$ is defined as 
 $$\beta_{Z_K(\underline{CA},\underline{A})}(s,\bar{t}):= \sum_{i \in \Z, J\subset [m]} {\beta}^{i,J}(Z_{K}(\underline{CA},\underline{A})) s^{i}\bar{t}^J$$
 and the reduced beta-polynomial $$\widetilde{\beta}_{Z_K(\underline{CA},\underline{A})}(s,\bar{t}):= {\beta}_{Z_K(\underline{CA},\underline{A})}(s,\bar{t}) -1 = \sum_{i \in \Z, \emptyset \neq J\subset [m]} {\beta}^{i,J}(Z_{K}(\underline{CA},\underline{A})) s^{i}\bar{t}^J$$
\end{defn}
The definition of multigraded Betti numbers of a simplicial complex is given in terms of a Tor algebra and the Stanley-Reisner ring, which are in terms of indeterminates in degree $2$. When considering he $(-i,2j)$-th Betti number of $K$ and applying Hochster's theorem, the $(-i,2j)$-th Betti number should be the dimension of the cohomology of $K_J$ in degree $(-i + 2j) - j - 1 = -i + j - 1$ where $|J|=j$. This is equivalent to Definition \ref{betti} since $\widetilde{H}^{i}(\Sigma \mathcal{K}_J \wedge \widehat{A}^J)=\widetilde{H}^{i-j-1}(K_J)$, with a change of variables for the shift in cohomological degree.
\begin{prop}\label{beta} The multigraded Betti numbers of $Z_{K(L_1,\ldots,L_m)}(\underline{CA},\underline{A})$ can be expressed in terms of the multigraded Betti numbers of the polyhedral products associated to each of the simplicial complexes $K, L_1, \ldots, L_m$. Their beta-polynomials have the following relationship.
$$\beta_{Z_{K(L_1,\ldots L_m)}(\underline{CA},\underline{A})}(s,\bar{t})=\beta_{Z_K(D^1,S^0)}(s,\widetilde{\beta}_{Z_{L_1}(\underline{CA},\underline{A})}(s,\bar{t}),\ldots ,\widetilde{\beta}_{Z_{L_m}(\underline{CA},\underline{A})}(s,\bar{t}))$$
\end{prop}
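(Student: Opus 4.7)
The plan is to compare the coefficient of each monomial $s^n\bar{t}^{J'}$ on the two sides, where $J' \subset [\sum l_i]$ decomposes uniquely as $J' = J'_1 \sqcup \cdots \sqcup J'_m$ with $J'_k \subset [l_k]$. The key input is Corollary \ref{cor:caa}, whose tensor-product decomposition of $H^*(Z_{K(L_1,\ldots,L_m)}(\underline{CA},\underline{A}))$ mirrors exactly the product structure produced by the polynomial substitution on the right-hand side.

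First I would unpack the outer factor. For the pair $(D^1,S^0)$, any iterated smash of $S^0$ with itself is again $S^0$, so $\widetilde{H}^*(\Sigma\mathcal{K}_I \wedge \widehat{A}^I) = \widetilde{H}^*(\Sigma\mathcal{K}_I)$ and hence $\beta^{i,I}(Z_K(D^1,S^0)) = \dim\widetilde{H}^i(\Sigma\mathcal{K}_I)$. Substituting $u_k \mapsto \widetilde{\beta}_{Z_{L_k}(\underline{CA},\underline{A})}(s,\bar{t}^{(k)})$ into $\beta_{Z_K(D^1,S^0)}(s,\bar{u}) = \sum_{i,I}\dim\widetilde{H}^i(\Sigma\mathcal{K}_I)\,s^i \bar{u}^I$, where $\bar{t}^{(k)}$ denotes the block of variables $t_{k,1},\ldots,t_{k,l_k}$, and reading off the coefficient of $s^n\bar{t}^{J'}$ yields
\[
\sum_{i_0 + \sum_{k\in I} i_k = n} \dim\widetilde{H}^{i_0}(\Sigma\mathcal{K}_I) \prod_{k\in I} \beta^{i_k, J'_k}(Z_{L_k}(\underline{CA},\underline{A})),
\]
where $I = \{k : J'_k \neq \emptyset\}$ is forced because the reduced polynomial $\widetilde{\beta}$ excludes the $J_k = \emptyset$ term.

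Next I would compute the left-hand side coefficient. By Definition \ref{betti}, $\beta^{n,J'}(Z_{K(L_1,\ldots,L_m)}(\underline{CA},\underline{A}))$ is the dimension of the degree-$n$ part of the unique summand in Corollary \ref{cor:caa} carrying multidegree $J'$, namely
\[
\widetilde{H}^*(\Sigma\mathcal{K}_I) \otimes \bigotimes_{k\in I}\bigl(\widetilde{H}^*(\Sigma\mathcal{L}_k|_{J'_k}) \otimes \widetilde{H}^*(\widehat{A}^{J'_k})\bigr),
\]
with $I = \{k : J'_k \neq \emptyset\}$. Over the ground field $\mathbb{k}$, the K\"unneth formula together with $\widetilde{H}^*(X\wedge Y) = \widetilde{H}^*(X)\otimes\widetilde{H}^*(Y)$ identifies $\dim\widetilde{H}^{i_k}(\Sigma\mathcal{L}_k|_{J'_k} \wedge \widehat{A}^{J'_k})$ with $\beta^{i_k, J'_k}(Z_{L_k}(\underline{CA},\underline{A}))$, and the total-degree dimension becomes exactly the convolution displayed above.

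I do not expect a serious obstacle. The main point requiring care is the bijection $J' \leftrightarrow (I, \{J'_k\}_{k\in I})$ together with the observation that the \emph{reduced} beta-polynomial, rather than the unreduced one, must appear on the right so that only summands with $J_k \neq \emptyset$ are produced; this is precisely the indexing regime of Corollary \ref{cor:caa} and is where the definition of $\widetilde{\beta}$ earns its keep.
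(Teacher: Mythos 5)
Your argument is correct and follows essentially the same route as the paper: both rest on the multidegree-$J'$ decomposition of Corollary \ref{cor:caa} (equivalently, the identification of $K(L_1,\ldots,L_m)_{J'}$ with the join of $K_I$ and the restrictions $L_k|_{J'_k}$), a K\"unneth degree convolution, and the observation that using the \emph{reduced} beta-polynomials enforces $J'_k\neq\emptyset$, so that $I=\{k: J'_k\neq\emptyset\}$. The paper presents this as a chain of equalities with an explicit change of degree variables, whereas you organize it as a coefficient-by-coefficient comparison; the content is the same.
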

\begin{proof}
Let $i'\in \mathbb{Z}$, $J=\cup J_i \subset[\sum{l_i}]$, $B=\{b_i\in [m]|J_i\neq \emptyset\}=\{b_1,\ldots,b_k\}$,  $n+p=i'$, $r+\sum r_s=n$, $\sum c_s=p$. Using Corollary \ref{cor:caa}, we have  

\noindent $\beta_{Z_{K(L_1,\ldots,L_m)}(\underline{CA},\underline{A})}(s,\bar{t})=\sum\limits_{i',J}\dim\widetilde{H}^{i'}(\Sigma K(L_1,\ldots,L_m)_J \wedge \widehat{A}^J) s^{i'}\bar{t}^J$

$=\sum\limits_{\substack{i',B,\\ J_{b_1},\ldots,J_{b_k}}}\sum\limits_{n,p}\dim\widetilde{H}^n(\Sigma K(L_1,\ldots,L_m)_J)\dim\widetilde{H}^p(\widehat{A}^{J_{b_1}}\wedge \ldots \wedge \widehat{A}^{J_{b_k}}) s^{i'}\bar{t}^{J}$

 $=\sum\limits_{\substack{i',B,\\ J_{b_1},\ldots,J_{b_k}}}\sum\limits_{n,p}(\sum\limits_{r,r_1,\ldots,r_k} \dim \widetilde{H}^r(\Sigma K_B)\prod\limits_{i=1}^{k}\dim \widetilde{H}^{r_i}(\Sigma L_{b_i}|_{J_{b_i}}))(\sum\limits_{c_1,\ldots,c_k}\prod\limits_{i=1}^k \dim \widetilde{H}^{c_i}(\widehat{A}^{J_i}))s^{i'}\bar{t}^{J}$

 $=\sum\limits_{\substack{i',B,\\ J_{b_1},\ldots,J_{b_k}}}\sum\limits_{n,p}\sum\limits_r \dim\widetilde{H}^r(\Sigma K_B) \prod\limits_{i=1}^k \sum\limits_{r_1,\ldots, r_k, c_1,\ldots, c_k}\dim\widetilde{H}^{r_i+c_j}(\Sigma L_{b_i}|_{J_{b_i}} \wedge \widehat{A}^{J_{b_i}})s^{i'}\bar{t}^{J_{b_1}}\ldots\bar{t}^{J_{b_k}}$

 Next, we will use a change of variables in order to rewrite this in a recognizable form, let $$ r_i+c_j=a_{i+j}$$
Then $i'=n+p=r+\sum r_i + \sum c_i=r+\sum a_{i+j}=r+ \sum a_u$.
Therefore, 

$\beta_{Z_{K(L_1,\ldots,L_m)}(\underline{CA},\underline{A})}(s,\bar{t})$

\noindent $=\sum\limits_{i',B, J_{b_1},\ldots,J_{b_k}}\sum\limits_{n,p}\sum\limits_r \dim\widetilde{H}^r(\Sigma K_B) \prod\limits_{i=1}^k \sum\limits_{r_1,\ldots, r_k, c_1,\ldots, c_k}\dim\widetilde{H}^{r_i+c_j}(\Sigma L_{b_i}|_{J_{b_i}} \wedge \widehat{A}^{J_{b_i}})s^{i'}\bar{t}^{J_{b_1}}\ldots\bar{t}^{J_{b_k}}$

\noindent $=\sum\limits_{B,r} \dim\widetilde{H}^{r}(\Sigma K_B) s^r \prod\limits_{i=1}^k \sum\limits_{J_b,a_u} \dim \widetilde{H}^{a_u}(\Sigma L_{b_i}|_{J_{b_i}} \wedge \widehat{A}^{J_{b_i}})s^{a_u}\bar{t}^{J_s}$

\noindent $=\beta_{Z_K(D^1,S^0)}(s,\widetilde{\beta}_{Z_{L_1}(\underline{CA},\underline{A})}(s,\bar{t}),\ldots ,\widetilde{\beta}_{L_m(\underline{CA},\underline{A})}(s,\bar{t}))$
\end{proof}

\begin{ex}
Consider the composed simplicial complex from example \ref{bdayhat}. We need to compute the reduced beta-polynomials of each complex $L_i$. Since the full subcomplex of $L_1$ associated to $\{11\}$ is the empty set, $$\widetilde{\beta}_{Z_{L_1}(\underline{CA},\underline{A})}=\sum \dim \widetilde{H}^i(\Sigma \emptyset \wedge A_{11})s^i t_{11}$$
The full subcomplexes of $L_2$ are all contractible, so its beta-polynomial is the zero polynomial. The only non-trivial full subcomplex of $L_3$ is associated to $\{31,32\}$, and hence its reduced beta-polynomial is $$\widetilde{\beta}_{Z_{L_3}(\underline{CA},\underline{A})}=\sum \dim \widetilde{H}^i(\Sigma \partial \Delta^1 \wedge A_{31} \wedge A_{32})s^i t_{31} t_{32}$$
The non-contractible full subcomplexes of $K$ are $\{1,2\},\{1,3\},\{1,2,3\}$. Since the beta-polynomial of $L_2$ is zero, any subsets of $[3]$ that contain $2$ do not contribute any non-trivial terms. Apply Proposition \ref{beta}.

\noindent $\beta_{Z_{K(L_1,\ldots L_m)}(\underline{CA},\underline{A})}(s,\bar{t})$
\begin{align} 
\quad &=\beta_{Z_K(D^1,S^0)}(s,\widetilde{\beta}_{Z_{L_1}(\underline{CA},\underline{A})}(s,\bar{t}),\ldots ,\widetilde{\beta}_{Z_{L_m}(\underline{CA},\underline{A})}(s,\bar{t})) \nonumber \\
\quad &=  \sum \limits_{i \in \Z, J\subset [m]} \dim\widetilde{H}^{i}(\Sigma \mathcal{K}_J) s^{i}(\widetilde{\beta}_{Z_{\underline{L}}(\underline{CA},\underline{A})})^J  \nonumber \\
\quad &=
1+\sum \dim H^i(\Sigma \partial \Delta^1)s^i (\widetilde{\beta}_{Z_{L_1}(\underline{CA},\underline{A})}) (\widetilde{\beta}_{Z_{L_3}(\underline{CA},\underline{A})})
 \nonumber \\
\quad  &=
 1+s(\sum \dim \widetilde{H}^i(\Sigma \emptyset \wedge A_{11})s^i t_{11})(\sum \dim \widetilde{H}^i(\Sigma \partial \Delta^1 \wedge A_{31} \wedge A_{32})s^i t_{31} t_{32}) \label{right}
\end{align}
Since all full subcomplexes of $K(L_1,L_2,L_3)$ are contractible except those associated to the empty set and and the set $\{11,31,32\}$, its  beta-polynomial is 
 \begin{align}
{\beta}_{Z_{K(L_1,L_2,L_3)}(\underline{CA},\underline{A})}(s,\bar{t})&= \sum \limits_{i \in \Z, J} \dim\widetilde{H}^{i}(\Sigma \mathcal{K}_J \wedge \widehat{A}^J) s^{i}\bar{t}^J \nonumber \\
&=1+\sum \dim \widetilde{H}^i(\Sigma \partial \Delta^2 \wedge A_{11} \wedge A_{31}  \wedge A_{32})s^i t_{11} t_{31} t_{32} \label{left}
\end{align}

If for example $A_i=S^2$ for all $i$, then both expressions \ref{left} and \ref{right} simplify to $$1+s^8 t_{11} t_{31} t_{32}$$
\end{ex}

\subsection{Ring Structure} Methods from \cite{BBCG4} can also be used to describe the ring structure of a polyhedral product $Z_K(\underline{CA},\underline{A})$ in terms of the cohomology ring of the decomposition given in Theorem \ref{BBCGSS}. Given generators 
\begin{center}
$ \alpha=n_\alpha \otimes a_1 \otimes \ldots \otimes a_m$

$\gamma = n_\gamma \otimes g_1 \otimes \ldots \otimes g_m $ \end{center}
 where $n_\alpha \in \widetilde{H}^*(\Sigma | lk(\sigma_1)|_{I_1^c})$, $n_\gamma \in \widetilde{H}^*(\Sigma |lk(\sigma_2)|_{I_2^c})$, the remaining factors are in the appropriate $E_i, B_i$ or $C_i$. For instance, for every $i\in \sigma_1$, $a_i \in C_i$ and so on. The cup product $\alpha \smallsmile \gamma$ is described in terms of coordinate-wise multiplication and a pairing $$ \widetilde{H}^*(\Sigma | lk(\sigma_1)|_{I_1^c}) \otimes \widetilde{H}^*(\Sigma | lk(\sigma_2)|_{I_2^c}) \rightarrow \widetilde{H}^*(\Sigma | lk(\sigma_3)|_{I_3^c})$$ where $I_3$ and $\sigma_3$ are described in terms of $I_1, I_2,\sigma_1,\sigma_2$. A complication that arises is that in the product the indexing set of the $C_i$'s could be larger. In the decomposition (from Theorem \ref{BBCGSS}), the $C_i$'s in every term are indexed by a simplex in $K$. Therefore, if the larger indexing set of the $C_i$'s does not correspond to a simplex in $K$, the cup product must be zero. We may think of $H^*(Z_K\xa)$ as living in the larger tensor algebra modulo the generalized Stanley-Reisner ideal of $K$.

Recall the generalized Stanley-Reisner ideal $I(K)$ in $\widetilde{H}^*(X_1) \otimes \ldots \otimes \widetilde{H}^*(X_m)$, $$I(K)=\langle x_{i_1} \otimes \ldots \otimes x_{i_k} \: | \: \{i_1, \ldots, i_k\} \notin K \rangle.$$
Since the generalized Stanley-Reisner ideal is essential to understanding the ring structure of $H^*(Z_K\xa)$, we will describe the generalized Stanley-Reisner ideal in the case that the underlying simplicial complex is a composition $K(L_1, \ldots, L_m)$ in terms of the generalized Stanley-Reisner ideal of $K, L_1, \ldots, L_m$.
\begin{prop}
The generalized Stanley-Reisner ideal of $K(L_1, \ldots, L_m)$ is the generalized Stanley-Reisner ideal of $K$ in terms of the generalized Stanley-Reisner ideals of $L_1, \ldots, L_m$  $$I(K(L_1, \ldots, L_m))=\langle c_{i_1}\otimes \ldots \otimes c_{i_k} \: | \: \{i_1, \ldots, i_k\} \notin K \rangle$$
where $c_{i}\in I(L_i)$ for $i_1 \leq i \leq i_k$.
\end{prop}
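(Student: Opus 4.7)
The plan is to prove the two inclusions by directly appealing to the alternate characterization of $K(L_1, \ldots, L_m)$ stated after Definition \ref{comp}: a set $\sigma = \sigma_1 \sqcup \ldots \sqcup \sigma_m$ (with $\sigma_i \subset [l_i]$) is a simplex of $K(L_1, \ldots, L_m)$ if and only if $\{i \in [m] \mid \sigma_i \notin L_i\}$ is a simplex of $K$. Equivalently, $\sigma$ is a \emph{non-}simplex of $K(L_1, \ldots, L_m)$ if and only if $\{i \mid \sigma_i \notin L_i\}$ is a non-simplex of $K$. Since $I(-)$ is generated as an ideal by square-free monomials indexed by minimal non-simplices, both inclusions reduce to a combinatorial check on these generators.

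For the inclusion $\supseteq$, I would take a generator of the form $c_{i_1} \otimes \cdots \otimes c_{i_k}$ with $\{i_1, \ldots, i_k\} \notin K$ and $c_{i_l} \in I(L_{i_l})$. By linearity and the definition of $I(L_{i_l})$, it is enough to treat the case where each $c_{i_l}$ is a monomial $\prod_{j \in \tau_{i_l}} v_{i_l j}$ for some non-simplex $\tau_{i_l} \notin L_{i_l}$. Any monomial multiple of $c_{i_1} \cdots c_{i_k}$ in the ambient polynomial ring corresponds to some $\sigma = \sigma_1 \sqcup \cdots \sqcup \sigma_m$ with $\sigma_{i_l} \supseteq \tau_{i_l}$. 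The key observation is that non-simplices are upward-closed in any simplicial complex, so $\sigma_{i_l} \notin L_{i_l}$ for each $l$, whence $\{i_1, \ldots, i_k\} \subseteq \{i \mid \sigma_i \notin L_i\}$. Again by upward-closure, applied this time in $K$, the larger set is a non-simplex of $K$, so $\sigma \notin K(L_1, \ldots, L_m)$, placing the monomial in $I(K(L_1, \ldots, L_m))$.

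For the inclusion $\subseteq$, I would start with a non-simplex $\sigma = \sigma_1 \sqcup \cdots \sqcup \sigma_m$ of $K(L_1, \ldots, L_m)$ and its monomial generator $\prod_{i,j \in \sigma} v_{ij}$. By the characterization above, $\{i_1, \ldots, i_k\} := \{i \mid \sigma_i \notin L_i\}$ is a non-simplex of $K$. Setting $c_{i_l} := \prod_{j \in \sigma_{i_l}} v_{i_l j}$ gives $c_{i_l} \in I(L_{i_l})$ since $\sigma_{i_l} \notin L_{i_l}$. Then the monomial factors as
\[
\Big(\prod_{l=1}^{k} c_{i_l}\Big) \cdot \prod_{i \notin \{i_1, \ldots, i_k\}} \prod_{j \in \sigma_i} v_{ij},
\]
which is $c_{i_1} \otimes \cdots \otimes c_{i_k}$ times a polynomial ring element, and thus lies in the ideal generated by the displayed elements.

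The routine step is the polynomial-ring bookkeeping in the factorization; the only subtle points are the two applications of upward-closure of non-simplices, one inside each $L_i$ and one inside $K$. Both are immediate from the defining axiom of a simplicial complex, so I do not expect any serious obstacle; the entire proof is essentially a translation of the characterization of $K(L_1, \ldots, L_m)$ into monomial language.
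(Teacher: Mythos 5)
Your proof is correct and follows essentially the same route as the paper's: both inclusions are verified by translating the alternate characterization of $K(L_1,\ldots,L_m)$ (a set is a simplex iff $\{i \mid \sigma_i \notin L_i\}$ is a simplex of $K$) into monomial language. Your version is in fact slightly more careful than the paper's, since you explicitly factor out the variables coming from the $\sigma_i$ that \emph{are} simplices of $L_i$ and invoke upward-closure of non-faces, steps the paper leaves implicit.
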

\begin{proof}
Recall from the proof of Theorem \ref{PPoverCSC} that $C_i=I(L_i)$.

Suppose $c^\sigma \in I(K(L_1, \ldots, L_m))$ where $\sigma \subset [\sum l_i ]$ such that $\sigma \notin K(L_1, \ldots, L_m)$.
Recall from the equivalent definition of $K(L_1, \ldots L_m)$ (after Definition \ref{comp}) that $\sigma$ is of the form
$$\sigma = \bigcup\limits_{i\in [m]}  \sigma_i$$
where $\sigma_i \subset [l_i]$ and $\{ i \in [m] \: | \: \sigma_i \notin L_i \} \notin K$. Let $A=\{ i \in [m] \: | \: \sigma_i \notin L_i \}=\{i_1,\ldots,i_k\}$. In other words $c^\sigma=c^A=c_{i_1}\otimes \ldots \otimes c_{i_k}$ where $A \notin K$ and $c_i \in I(L_i)$. It follows that $c^\sigma \in \langle c_{i_1}\otimes \ldots \otimes c_{i_k} \: | \: \{i_1, \ldots, i_k\} \notin K \rangle$.

Suppose $c=c_{i_1}\otimes \ldots \otimes c_{i_k} \in \langle c_{i_1}\otimes \ldots \otimes c_{i_k} \: | \: \{i_1, \ldots, i_k\} \notin K \rangle$
where $c_{i}\in I(L_i)$ for $i_1 \leq i \leq i_k$. Since $c_i \in I(L_i)$, it is of the form $c_i=c^{\tau_i}$ for some $\tau_i \notin L_i$. Since $\{ i_1, \ldots, i_k \} \notin K$, we have that $\tau = \cup \tau_i \notin K(L_1,\ldots, L_m)$. Therefore, $c=c^\tau \in I(K(L_1,\ldots, L_m))$.
\end{proof}

\begin{ex}
The generalized Stanley-Reisner ideal is generated by the minimal non-faces, a set that is not a simplex but every subset is. Suppose we have the following simplicial complexes.
\begin{center}
$\begin{array}{lll}
L_1 \subset [2] & \qquad L_1=\{\{11\},\{12\}\} & \qquad I(L_1)=\langle c_{11} \otimes c_{12} \rangle \\
L_2 \subset [2] & \qquad L_2=\{ \{21\} \} & \qquad I(L_2)= \langle c_{22} \rangle \\
K \subset [2] & \qquad K=\{\{2\}\} & \qquad I(K)=\langle c_1 \rangle = \langle c_{11} \otimes c_{12} \rangle
\end{array}$
\end{center}
The composition $K(L_1,L_2)=L_1 \ast \Delta^1=\{\{11,21,22\},\{12,21,22\}\}$ and hence $$I(K(L_1,L_2))=\langle c_{11}\otimes c_{12} \rangle$$.
\end{ex}

\section{The pair $(L_i,\emptyset)$} \label{pair}
In this section we will find a formula for the cohomology groups of $Z_{Z^*_K(L_i, \emptyset)}(\underline{X},\underline{A})$, the polyhedral product over a polyhedral join given by the pairs $(L_i,\emptyset)$. In this case, we get a similar formula to Theorem \ref{aa}
\begin{equation}\label{join}
Z_{Z^*_K(L_i,\emptyset)}(\underline{X}, \underline{A})=Z_K(Z_{L_i}(\underline{X},\underline{A}),\prod_{j\in [l_i]}{A_j})
\end{equation} 
As an application, we can write the polyhedral product $Z_K(S^n,\vee S^0)$ as the real moment-angle complex $Z_{Z^*_K(\partial \Delta^{n_i},\emptyset)}(D^1,S^0)$.

It follows from the discussion in  Section \ref{general} that the kernel, cokernel and image can be computed if the links of simplices in $L_i$ can be described in general. Note that $L_i$ and its subsimplicial complex, $\emptyset$, do not have any (nontrivial) simplices in common, so the links do not present any issues. Equation \ref{join} and Theorem \ref{BBCGSS} imply the following formula.
\begin{thm} \label{empty}
Given simplicial complexes $L_i$ on the vertex sets $[l_i]$ with no ghost vertices and pairs $(X_{ij},A_{ij})$, where $i$ varies in $[m]$ and $j$ varies in $[l_i]$, that satisfy the freeness condition of Definition \ref{free} with decompositions   $$H^*(X_{ij})=B_{ij}\oplus C_{ij}$$ $$H^*(A_{ij})=B_{ij}\oplus E_{ij}$$
 Then we have

    $H^*(Z_{Z^*_K(L_i,\emptyset)}(\underline{X}, \underline{A}))=$
$$\bigoplus \limits_{J, \tau, I, \sigma}  E^T \otimes B^{(T\cup S)^c} \otimes C^{S} \otimes (\bigotimes \limits_{v \in \tau} \widetilde{H}^*(\Sigma |lk(\sigma)|_I)) \otimes \widetilde{H}^*(\Sigma |lk(\tau)|_{J^c}) $$
 where 
\begin{enumerate}
\item $J \subset [m]$ with a simplex $\tau$ of $K$ such that $\tau \subset J$ 
 \item For $v \in \tau$, take subsets $I_v \subset [l_v]$ and a simplex $\sigma \in L_v$ such that $\sigma \subset I^c$. For $k \in [m] \backslash J$, consider subsets $I_k \subset [l_k]$. Then $T$ and $S$ are defined by $$ T=\displaystyle (\bigcup_{[m]\backslash J} I_k) \cup (\bigcup_{v\in \tau} I_v)$$ $$ S= \bigcup_{v \in \tau} \sigma_v$$
\end{enumerate}

\end{thm}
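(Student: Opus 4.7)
The plan is to specialize Theorem~\ref{genjoin} to $K_i=\emptyset$ for every $i$ and then apply the BBCG decomposition of Theorem~\ref{BBCGSS} to the outer polyhedral product. The specialization of Theorem~\ref{genjoin} is precisely Equation~(\ref{join}), so it suffices to compute the cohomology of $Z_K$ applied to the pairs $(Y_i, A'_i) := (Z_{L_i}(\underline{X},\underline{A}),\,\prod_{j \in [l_i]} A_{ij})$.

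Next I would set up the freeness decomposition $H^*(Y_i) = B_i \oplus C_i$, $H^*(A'_i) = B_i \oplus E_i$ for the inclusion $A'_i = Z_\emptyset(\underline{X},\underline{A}) \hookrightarrow Z_{L_i}(\underline{X},\underline{A}) = Y_i$. By the analysis at the end of Section~\ref{general}, the map $H^*(Y_i)\to H^*(A'_i)$ is the dual of the link inclusions $lk_\emptyset(\sigma)_{I^c} \hookrightarrow lk_{L_i}(\sigma)_{I^c}$. Since $\emptyset$ contains only the empty simplex, only summands of the BBCG decomposition of $H^*(Y_i)$ indexed by $\sigma=\emptyset$ can map nontrivially; moreover, the link comparison $\widetilde{H}^*(\Sigma|L_i|_{I^c}|)\to\widetilde{H}^*(\Sigma|\emptyset|)=\widetilde{H}^*(S^0)$ is zero in positive degree and is zero in degree $0$ unless $L_i|_{I^c}$ is empty, and the no-ghost-vertices hypothesis forces this to mean $I = [l_i]$. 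Hence
$$B_i \;=\; \bigotimes_{j\in[l_i]} B_{ij}, \qquad E_i \;=\; \bigoplus_{I\subsetneq [l_i]} E^{I^c}\otimes B^{I},$$
and $C_i$ is the sum of all BBCG summands of $H^*(Y_i)$ other than the distinguished one indexed by $(\sigma,I)=(\emptyset,[l_i])$.

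Finally I would substitute these $B_i$, $C_i$, $E_i$ into Theorem~\ref{BBCGSS} applied to $Z_K(Y_i, A'_i)$. For each outer pair $(J,\tau)$ with $\tau \in K$ and $\tau \subset J \subset [m]$, expanding $E^{J^c} = \bigotimes_{k \in J^c} E_k$ introduces a choice of $I_k \subsetneq [l_k]$ for each $k \in J^c$; expanding $\bigotimes_{v \in \tau} C_v$ introduces a choice of $(\sigma_v, I_v)$ with $\sigma_v \in L_v$, $\sigma_v \subset I_v \subset [l_v]$, and $(\sigma_v, I_v) \neq (\emptyset, [l_v])$ for each $v \in \tau$; and $\bigotimes_{v' \in J\setminus\tau} B_{v'}$ contributes only the pure $B$-part at those indices. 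Reading off at each base index $(i,j)$ whether the contribution is $E_{ij}$, $C_{ij}$, or $B_{ij}$ produces the disjoint sets $T$ (where $E$'s appear) and $S$ (where $C$'s appear) of Theorem~\ref{empty}, after the usual convention switch between a subset and its complement; the two link factors $\widetilde{H}^*(\Sigma|lk_{L_v}(\sigma_v)_{I_v^c}|)$ and $\widetilde{H}^*(\Sigma|lk_K(\tau)_{J^c}|)$ descend from the inner and outer BBCG applications respectively. The main obstacle is the bookkeeping --- keeping the nested indices $(J,\tau)$, $(I_k)_{k \in J^c}$, and $(I_v,\sigma_v)_{v\in\tau}$ straight so that the $E$-, $C$-, and $B$-positions across $\bigsqcup_i [l_i]$ are disjoint and exhaust everything --- and a secondary check is that the intermediate pair $(Y_i, A'_i)$ meets the freeness hypothesis of Definition~\ref{free}, which follows from applying Theorem~\ref{splitting} to both $Z_{L_i}$ and $Z_\emptyset$ and comparing the resulting stable wedge decompositions under the inclusion.
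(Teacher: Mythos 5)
Your proposal is correct and follows essentially the same route as the paper: specialize to Equation~(\ref{join}), compute the decomposition $B_i=B^{[l_i]}$, $E_i=\bigoplus_{I\subsetneq[l_i]}B^I\otimes E^{I^c}$, and $C_i$ equal to the remaining BBCG summands of $H^*(Z_{L_i}(\underline{X},\underline{A}))$ via the link-inclusion analysis of Section~\ref{general} (using the no-ghost-vertices hypothesis exactly where you use it), and then substitute into Theorem~\ref{BBCGSS} for the outer complex $K$. The bookkeeping you describe for $T$ and $S$ is precisely what the paper's substitution step carries out.
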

\begin{proof}
From Definition \ref{free}, we need to find the kernel, $E_i$, image, $B_i$, and cokernel, $C_i$, of the map $$H^*(Z_{L_i}(\underline{X},\underline{A}))  \rightarrow  H^*(\prod_{j\in [l_i]}{A_j})$$ where $H^*(\prod_{j\in [l_i]}{A_j})= E_i \oplus B_i$ and $H^*(Z_{L_i}(\underline{X},\underline{A})) = C_i \oplus B_i$.
Since the cohomology of each space in the pair is given by

$\begin{array}{lcl}
H^*(\prod_{j\in [l_i]}{A_j})&=&\bigoplus \limits_{I \subset [l_i]} B^I \otimes E^{I^c}\\
%
H^*(Z_{L_i}(\underline{X},\underline{A}))&=&\bigoplus\limits_{\substack{\sigma\subset I\subset [m] \\ \sigma \in L_i}} E^{I^c} \otimes \widetilde{H}^*(\Sigma | lk(\sigma)|_{I^c}) \otimes Y^{I,\sigma}\\
          &=&\big( \bigoplus \limits_{I\subset [m]}E^{I^c}\otimes B^I \otimes \widetilde{H}^*(\Sigma \mathcal{L}_i|_{I^c}) \big)  \\
        & & \multicolumn{1}{c}{ \oplus } \\
				 & & \big (\bigoplus\limits_{\substack{ \sigma\subset I\subset [m] \\ \emptyset \neq \sigma \in L_i}} E^{I^c} \otimes \widetilde{H}^*(\Sigma |lk(\sigma)|_{I^c}) \otimes Y^{I,\sigma} \big ) \end{array} $
				
				\
				
\noindent and the full subcomplex $ {\mathcal{L}_i}|_{I^c}$ is only empty when $I^c=\emptyset$ (because $L_i$ has no ghost vertices), we have that 
 \begin{align*}
B_i&=B^{[l_i]}\\
E_i&= \bigoplus \limits_{I \subsetneq [l_i]} B^I \otimes E^{I^c}\\
C_i&=\bigoplus\limits_{\substack{\sigma\subset I\subsetneq [m] \\ \sigma \in L_i \\ \emptyset \neq \sigma \in L_i}} E^{I^c} \otimes \widetilde{H}^*(\Sigma | lk(\sigma)|_{I^c}) \otimes Y^{(I,\sigma)}\end{align*}

\noindent where ``$\sigma, I \neq \emptyset$'' means that $\sigma$ and $I$ are not both the empty set. Then substituting,

 \noindent $H^*(Z_{Z^*_K(L_i,\emptyset)}(\underline{X}, \underline{A}))$
 
 $\begin{array}{ccll}
\quad &=
& \multicolumn{2}{l}{\bigoplus \limits_{J, \tau}
\widetilde{H}^*(\Sigma |lk(\tau )|_{J^c}) \otimes E^{[m]\backslash J} \otimes C^\tau \otimes B^{J\backslash \tau} } 
\\
\quad &=
        &\bigoplus \limits_{J, \tau} \Big( \widetilde{H}^*(\Sigma |lk(\tau)|_{J^c}) 
            & \otimes \bigotimes \limits_{k \in [m]\backslash J} (\bigoplus \limits_{L\subset [l_k]} E^L \otimes B^{L^c})\\
            & & & \otimes \bigotimes \limits_{v \in \tau} \big( \bigoplus \limits_{\substack{ I \subset [l_v],\\  \sigma \in L_v,  \sigma, I \neq \emptyset}} E^I \otimes Y^{I^c,\sigma} \otimes \widetilde{H}^*(\Sigma |lk(\sigma)|_I)\big) 
            \\
            & & & \otimes \bigotimes \limits_{ u \in J\backslash \tau} B^{[l_u]} \quad \Big) 
            \\
\quad &=
        &\bigoplus \limits_{\substack{J, \tau, \\ I, L, \sigma}} \Big(  \widetilde{H}^*(\Sigma |lk(\tau)|_{J^c})
            & \bigotimes \limits_{k \in [m]\backslash J} ( E^L \otimes B^{L^c}) 
            \\
            & & & \otimes \bigotimes \limits_{v \in \tau} \big( E^I \otimes Y^{I^c,\sigma} \otimes \widetilde{H}^*(\Sigma |lk(\sigma)|_{I}) \big) 
            \\
            & & & \otimes \bigotimes \limits_{u \in J\backslash \tau} B^{[l_u]} \quad \Big)
\end{array}$

\end{proof}

\begin{cor}
Suppose $(X_{ij},A_{ij})=(CA_{ij},A_{ij})$. Then $$H^*(Z_{Z^*_K(L_i,\emptyset)}(\underline{CA},\underline{A}))=\bigoplus \limits_{J,\tau, I_k,I_v \neq \emptyset} \big( \widetilde{H}^*(\widehat{A}^{T}) \otimes (\bigotimes \limits_{v \in \tau} \widetilde{H}^*(\Sigma \mathcal{L}_v|_{I_v})) \otimes \widetilde{H}^*(\Sigma |lk(\tau)|_{J^c}))\big)$$  
\noindent where 
\begin{enumerate} 
\item $J \subset [m]$ with a simplex $\tau$ of $K$ such that $\tau \subset J$ 
\item For $k \in [m]\backslash J$ and $v \in \tau$, take subsets $I_v \subset [l_v]$ and $I_k \subset [l_k]$, and define $T$:
$$T:= (\bigcup_{k\in [m]\backslash J} I_k) \cup (\bigcup_{v\in \tau} I_v)$$
\end{enumerate} 
\end{cor}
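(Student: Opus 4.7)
The plan is to specialize Theorem \ref{empty} to the pair $(X_{ij},A_{ij}) = (CA_{ij},A_{ij})$ and then use the trivial structure of the cone to collapse the summation. For the cone, $CA_{ij}$ is contractible, so the long exact sequence of the pair splits with $B_{ij} = k$ (the ground field concentrated in degree $0$), $C_{ij} = 0$, and $E_{ij} = \widetilde{H}^*(A_{ij})$. This is the only input specific to the cone case; everything else is bookkeeping.

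First, I would substitute $C_{ij}=0$ into the formula of Theorem \ref{empty}. The factor $C^S$ vanishes unless $S = \emptyset$, which forces $\sigma_v = \emptyset$ for every $v \in \tau$. For the surviving summands, $\widetilde{H}^*(\Sigma|lk(\sigma)|_{I_v})$ becomes $\widetilde{H}^*(\Sigma|lk(\emptyset)|_{I_v}) = \widetilde{H}^*(\Sigma \mathcal{L}_v|_{I_v})$, matching the statement of the corollary. Next, since $B_{ij} = k$, the factor $B^{(T\cup S)^c}$ is a copy of the ground field and is absorbed into the tensor product without comment. The remaining $E$-factors assemble into $E^T = \bigotimes_{ij\in T}\widetilde{H}^*(A_{ij}) = \widetilde{H}^*(\widehat{A}^T)$, where $T = (\bigcup_{k\in[m]\setminus J}I_k)\cup(\bigcup_{v\in\tau}I_v)$ exactly as described.

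Finally, I would track the surviving index conditions. The restriction $I_k\neq\emptyset$ for $k\in[m]\setminus J$ arises from the condition $I\subsetneq[l_k]$ in the description of the cokernel $E_k$ within the proof of Theorem \ref{empty}, once we pass to complements (identifying $I_k$ with $I^c$, which is then nonempty). The restriction $I_v\neq\emptyset$ for $v\in\tau$ comes from the ``$\sigma,I$ not both empty'' requirement in the description of the kernel $C_v$, combined with the fact that we have already forced $\sigma_v = \emptyset$. The main obstacle here is purely bookkeeping: aligning the complement conventions between the two statements and verifying that no summand is accidentally dropped or double-counted. No new mathematical idea is needed beyond the specialization of Theorem \ref{empty}.
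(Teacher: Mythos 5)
Your proposal is correct and follows essentially the same route as the paper: specialize Theorem \ref{empty} with $B_{ij}=1$, $C_{ij}=0$, $E_{ij}=\widetilde{H}^*(A_{ij})$, note that vanishing of $C$ forces $\sigma_v=\emptyset$ (so the links become full subcomplexes $\mathcal{L}_v|_{I_v}$ and $I_v\neq\emptyset$), and assemble the $E$-factors into $\widetilde{H}^*(\widehat{A}^T)$. Your explicit tracking of where the nonemptiness conditions on $I_k$ and $I_v$ come from is, if anything, slightly more careful than the paper's version of the argument.
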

\begin{proof}
With the given pairs, we know that $C_{ij}=0$, $B_{ij}=1$ and $E_{ij}=\widetilde{H}^*(A_{ij})$ for all $ij$. Since $C_{ij}=0$, $\sigma$ in Proposition \ref{empty} must be the empty set. Therefore $I$ cannot be the empty set and

\noindent $H^*(Z_{Z^*_K(L_i,\emptyset)}(\underline{CA},\underline{A}))$

$=\bigoplus \limits_{\substack{J,\tau, I_k, \\ I_v \neq \emptyset}} \big( (\bigotimes \limits_{k\in [m] \backslash J} E^{I_k} \otimes B^{I_k^c} ) \otimes (\bigotimes \limits_{v \in \tau} E^{I_\alpha}\otimes B^{I_v^c} \otimes \widetilde{H}^*(\Sigma L_v|_{I_v})) \otimes (\bigotimes \limits_{j \in J \backslash \tau}B^{[l_\beta]}) \otimes \widetilde{H}^*(\Sigma |lk(\tau)|_{J^c}) \big)$

$=\bigoplus \limits_{J,\tau, I_k,I_\alpha \neq \emptyset} \big( (\bigotimes \limits_{k\in [m] \backslash J} E^{I_k} )\otimes (\bigotimes \limits_{v \in \tau} E^{I_v} \otimes \widetilde{H}^*(\Sigma L_v|_{I_v}))  \otimes \widetilde{H}^*(\Sigma |lk(\tau)|_{J^c})\big)$
\end{proof}

\bibliographystyle{amsplain}
\bibliography{mybibliography}
\end{document}